\documentclass[mn,a4paper,fleqn]{wm-art}
\usepackage{times,cite,w-thm}
\usepackage{amsmath,amssymb,calc,graphicx}

\numberwithin{equation}{section}

\hyphenpenalty=5000
\tolerance=1000

\newcommand{\itemseparate}{\itemsep0pt}
\newcommand{\refpart}[1]{{\it (#1)}}

\newcommand{\heunde}[1]{{\textstyle\left(#1\right)}}
\newcommand{\hpgo}[2]{{}_{#1}\mbox{\rm F}_{\!#2}}
\newcommand{\hpg}[5]{{}_{#1}\mbox{\rm F}_{\!#2}\! \left(\left.{#3 \atop #4}\right|\, #5 \right) }
\newcommand{\heun}[5]{\mbox{\rm Hn}\!\left( {#1 \atop #2} \left|  {#3   \atop #4} \right|\, #5 \right) }

\newcommand{\AJ}[1]{\mbox{\rm #1}}
\newcommand{\CPR}[2]{\,[#2]}

\newcommand{\swq}[2]{#2 &\em #1}

\newcommand{\led}[1]{#1}

\newcommand{\jsp}{\!\!\!}
\newcommand{\fr}[2]{\stackrel{#1}{}\hspace{-3.5pt}/#2}
\newcommand{\frc}[2]{\hspace{-0.5pt}\stackrel{#1}{}\hspace{-4pt}/\mbox{\scriptsize $\!#2$}}

\newcommand{\cdt}{\!\cdot\!}
\newcommand{\xsix}[1]{\hspace{-3pt}\footnotesize$\frac{#1:}{}\!$}

\newcommand{\ZZ}{{\Bbb Z}}
\newcommand{\NN}{{\Bbb N}}
\newcommand{\QQ}{{\Bbb Q}}
\newcommand{\CC}{{\Bbb C}}
\newcommand{\PP}{{\Bbb P}}
\newcommand{\RR}{{\Bbb R}}
\newcommand{\FF}{{\Bbb F}}
\newcommand{\OO}{{\cal O}}

\begin{document}

\DOIsuffix{theDOIsuffix}
\Volume{248}
\Month{01}
\Year{2007}
\pagespan{1}{}
\keywords{Belyi functions, Heun functions, pull-back transformations}
\subjclass[msc2010]{33E30, 33C05, 57M12, 14-04}%

\title[Belyi functions for hyperbolic Heun functions]
{Belyi functions for hyperbolic hypergeometric-to-Heun\\ transformations}

\author[M. van Hoeij]{Mark van Hoeij\inst{1,}
\footnote{E-mail: {\sf hoeij@math.fsu.edu}. Supported by NSF grant 1017880.}}
\address[\inst{1}]{Department of Mathematics, 
Florida State University, Tallahassee, Florida 32306, USA.}
        
\author[R. Vid\=unas]{Raimundas Vidunas\inst{2,}
\footnote{E-mail: {\sf vidunas@math.kobe-u.ac.jp}. Supported by JSPS grant 20740075.}}
\address[\inst{2}]{Lab of Geometric \& Algebraic Algorithms,
Department of Informatics \& Telecommunications,
National Kapodistrian University of Athens,
Panepistimiopolis 15784, Greece . }

\begin{abstract}
A complete classification of Belyi functions for
transforming certain hypergeometric equations to Heun equations is given. 
The considered hypergeometric equations have the local exponent differences $1/k,1/\ell,1/m$ 
that satisfy $k,\ell,m\in\NN$ and the hyperbolic condition $1/k+1/\ell+1/m<1$.
There are 366 Galois orbits of Belyi functions
giving the considered (non-parametric) hypergeometric-to-Heun pull-back transformations.
Their maximal degree is 60, which is well beyond reach of
standard computational methods. To obtain these Belyi functions, we developed two
efficient algorithms that exploit the implied pull-back transformations. 
\end{abstract}

\maketitle

\section{Introduction}
\label{sec:intro}`

Belyi functions and {\em dessins d'enfants} \cite{Wikipedia} is a captivating field of research
in algebraic geometry, complex analysis, Galois theory and related fields.
However, computation of Belyi functions of degree over 20 is still considered hard 
\cite[Example 2.4.10]{LandoZvonkin} 
even for genus 0 Belyi coverings $\PP^1\to\PP^1$.
This computational difficulty promises a long lasting appeal,
both for constructivists and theoreticians.
Grothendieck \cite[pg.~248]{GrothESQ} doubted that
{\em ``there is a uniform method for solving the problem by computer''}.
The subject of this paper is effective computation of certain Belyi functions $\PP^1\to\PP^1$,
of degree up to 60, utilizing the fact that those functions transform Fuchsian differential
equations with a small number of singularities.

This paper considers  {\em genus $0$ Belyi functions}, 
that is, rational functions $\varphi:\PP_x^1\to\PP_z^1$ that branches only in the 3 fibers $z=\varphi(x)\in\{0,1,\infty\}$.  
We distinguish the two projective lines by the indices $x,z$ just as in \cite{HeunClass}.
To describe the Belyi functions we classify, we introduce the following definitions.


\begin{definition} \label{df:klmregular} 
Given positive integers $k,\ell,m$, a Belyi function $\varphi:\PP_x^1\to\PP_z^1$ 
is called {\em $(k,\ell,m)$-regular} if all points above $z=1$ have branching order $k$,
all points above $z=0$ have branching order $\ell$, and all points above $z=\infty$
have branching order $m$. 
\end{definition}

Examples of $(2,3,m)$-regular Belyi functions with $m\in\{3,4,5\}$
are the well-known Galois coverings $\PP^1\to\PP^1$ of degree 12, 24, 60
with the tetrahedral $A_4$, octahedral $S_4$ or icosahedral $A_5$ monodromy groups,
respectively. The Platonic solids give their dessins d'enfant \cite{platonic}.

\begin{definition} \label{df:klmnregular}
Given yet another positive integer $n$, a
 Belyi function $\varphi:\PP_x^1\to\PP_z^1$ 
is called {\em $(k,\ell,m)$-minus-$n$-regular} 
if, with exactly $n$ exceptions, all points above $z=1$ have branching order $k$,
all points above $z=0$ have branching order $\ell$, and all points above $z=\infty$
have branching order $m$. We will also use the shorter term  {\em $(k,\ell,m)$-minus-$n$}.
\end{definition}

Examples of $(k,\ell,m)$-minus-$2$ functions 
are quotients of the just mentioned Galois coverings by a cyclic monodromy group. 
If $1/k+1/\ell+1/m>1$ and $n \geq 3$, there are $(k,\ell,m)$-minus-$n$
Belyi functions 
of arbitrary high degree. They give Kleinian pull-back transformations \cite{klein78,kleinvhw} 
to second order Fuchsian equations with finite monodromy (i.e., a basis of algebraic solutions)
from a few standard hypergeometric equations. 
An example of a $(2,3,5)$-minus-3 Belyi function of degree 1001 is given online
at \cite[\sf NamingConvention]{HeunURL}. 
As Remark \ref{rm:noexist} here shows, $(k,\ell,m)$-minus-1-regular Belyi functions 
exist only if $1\in\{k,\ell,m\}$.

\begin{definition} \label{df:mnhyperbolic}
A Belyi function $\varphi$ 
is called {\em minus-$n$-hyperbolic} if:
\begin{enumerate} \itemseparate
\item \label{df:ConditionH1} there are positive integers $k,\ell,m$ satisfying $1/k+1/\ell+1/m<1$ (the {\em hyperbolic condition})
such that $\varphi$ is $(k,\ell,m)$-minus-$n$-regular;
\item \label{df:ConditionH2} there is at least one point of branching order $k$ above $z=1$, 
a point of order $\ell$ above $z=0$, and a point of order $m$ above $z=\infty$.
\end{enumerate}
\end{definition}

Minus-3-hyperbolic Belyi functions are listed in \cite[\S9]{VidunasFE}.
Table~3 in \cite{VidunasFE} lists nine\footnote{Minus-3-hyperbolic Belyi functions 
give rise to the hypergeometric transformations described in \cite[\S9]{VidunasFE}.
There are 10 different such Belyi functions up to M\"obius transformations, in 9 Galois orbits.
The degree 18 Belyi function there is defined over $\QQ(\sqrt{-7})$. }
Galois orbits of such Belyi functions, of degree up to 24. 

This paper gives all {\em minus-$4$-hyperbolic Belyi functions} $\PP^1\to\PP^1$.
The motivation is that they give
transformations of Gauss hypergeometric differential equations without 
Liouvillian \cite{Wikipedia}
solutions to Heun equations (i.e., Fuchsian equations with 4 singularities). This allows to express 
non-Liouvillian Heun functions in terms of better understood Gauss hypergeometric functions.
The application to these transformations of Fuchsian equations is discussed in \S \ref{sec:heun}. 
This paper, combined with the list of {\em parametric} hypergeometric-to-Heun transformations
in \cite{HeunForm}, covers all non-Liouvillian cases of hypergeometric-to-Heun  transformations.

We used two algorithms to compute the minus-$4$-hyperbolic Belyi functions.
They both utilize the fact that these Belyi functions give hypergeometric-to-Heun transformations.
One algorithm is probabilistic and uses modular lifting. It exploits the fact that Heun's equation is 
represented by few parameters. The other algorithm is deterministic, and uses existence of a
hypergeometric-to-Heun transformation to get more algebraic equations for the (a priori)
undetermined coefficients of a Belyi function.

The branching patterns are enumerated in \S \ref{sec:classify}, 
following the approach from \cite{HeunClass}.
We notice that some of our Belyi functions are related to 
notable Shimura curves  \cite{elkies237}, \cite{voightg2}.
The application to hypergeometric-to-Heun transformations is explained  in \S \ref{sec:heun}.
Our algorithms are presented in \S \ref{sec:algorithms}.
Section \ref{sec:conic} discusses special {\em obstructed} cases of encountered Belyi functions.
The Appendix sections give ordered lists A--J of computed Belyi functions,
discusses composite Belyi functions, and compares our results
with Felixon's list \cite{Felixon} of {\em Coxeter decompositions} in the hyperbolic plane.
All dessins d'enfant of computed Belyi coverings are depicted in this paper, 
most of them next to the A--J tables of \S \ref {sec:ajtables}.
Our list of dessins is larger than \cite{dcatalog,BeukersMontanus,Luttmer} combined.

%
%

\section{Organizing definitions, examples}
\label{sec:deeper}

We start with a few definitions that will help us to organize the list of Belyi functions.
Then we take a relaxed look at a few examples, including those of the largest degree 60.
At the same time, dessins d'enfant are introduced more fully, 
setting a geometric tone of our presentation.

\begin{definition}
Let $\varphi$ be a $(k,\ell,m)$-minus-$n$-regular 
Belyi function for some $n$. The {\em regular branchings} of $\varphi$
are the points above $z=1$ of order $k$, the points above $z=0$ of order $\ell$,
and the points above $z=\infty$ of order $m$. The other $n$ points in the three fibers 
are called {\em exceptional points} of $\varphi$. A {\em branching fraction} of $\varphi$
is a rational number $A/B$, where $A$ is a branching order at an exceptional point $Q$,
and $B\in\{k,\ell,m\}$ is the prescribed branching order for the fiber of $Q$.
\end{definition}

\begin{definition} \label{df:jtfield}
Let $\varphi:\PP_x^1\to\PP_z^1$ be a $(k,\ell,m)$-minus-$4$
Belyi function. Let $q_1,q_2,q_3,q_4$ denote its exceptional points. 
The {\em $j$-invariant} of $\varphi$ is the $j$-invariant
of the elliptic curve  \mbox{$Y^2=\prod_{q_i\neq \infty}(X-q_i)$}, 
given by formula~$(\ref{eq:jinv})$ below.
It is invariant under M\"obius transformations of $\PP_x^1$.

A {\em canonical form} of $\varphi$ is a composition of $\varphi$ with a M\"obius transformation
that has three exceptional points at $x=0, 1, \infty$. The fourth exceptional point 
then becomes $x=t$, where $t$ is a cross-ratio of $q_1,q_2,q_3,q_4$.
The cross-ratio depends on the order of $q_1,q_2,q_3,q_4$, and there is an $S_3$-orbit  ($S_3 \cong S_4/V_4$)
\begin{equation} \label{S3action} 
\left\{t,1-t,\frac{t}{t-1},\frac1t,\frac1{1-t},1-\frac1t \right\}
\end{equation}
of related cross-ratios. Any of these values is a {\em $t$-value} of $\varphi$.
The $j$-invariant is
\begin{equation} \label{eq:jinv}
j(t)=\frac{256\,(t^2-t+1)^3}{t^2 (t-1)^2}.
\end{equation}
\end{definition}
As an example, $t\in\{-1,2,\frac12\}$ gives $j=1728$. If $j \not\in \{0, 1728\}$ then the
six $t$-values in above $S_3$-orbit are distinct. The $S_3$-action gives a homomorphism
$S_3 \rightarrow {\rm Gal}(\QQ(t)/\QQ(j))$, hence $[\QQ(t):\QQ(j)] \in \{1,2,6\}$.

\begin{definition} \label{df:jtfield2}
The {\em $t$-field} resp. {\em $j$-field} of $\varphi$ is the number field
generated by a $t$-value resp. the $j$-invariant.
The $r$-field ({\em canonical realization field}) of $\varphi$ is the
smallest field over which a canonical form of $\varphi$ is defined.
These fields do not depend on the ordering of the $4$ exceptional points.
\end{definition}

\begin{example}   \label{ExampleB7b}
The degree 12 rational function 
\[ \varphi(x) = \frac{64x^2 (x-3)^9 (x-9)}{27 (x-1) (8x^3-72x^2-27x+27)^3} \]
is a $(2,3,9)$-minus-$4$ Belyi map. Indeed, the numerator of $\varphi(x)-1$
is a full square. 
It is already in a canonical form, as $x=0$, $x=1$ and $x=\infty$ (of branching order 2)
are exceptional points. The fourth exceptional point $x=9$ is a $t$-value. The $j$-invariant is
equal to $2^273^3/3^4$ by formula $(\ref{eq:jinv})$. 

The {\em branching pattern} of $\varphi$ is given by three partitions of the degree $d=12$
into branching orders above $0,1,\infty$. Using the notation in \cite{HeunClass}, we express 
the {\em branching pattern} of $\varphi$ shortly as follows:
\[
 6 \, [2] = 3 \, [3] +2+1 = [9]+2+1.
\]
The prescribed branching orders are indicated with square brackets, with their multiplicity in front.
The 4 branching orders that are not enclosed in square brackets
represent the 4 exceptional points. Dividing them by their prescribed branching order(s)
produces the 4 branching fractions: $1/3,2/3,1/9,2/9$.
\end{example}

In the application 
setting of hypergeometric-to-Heun transformations in \S \ref{sec:heun},
the regular branchings will become {\em regular points} (after a proper projective normalization)
of the pulled-back Heun equation $H$; the exceptional points will be the {\em singularities} of $H$;
and the branching fractions will be the {\em exponent differences} of $H$. 
The exponent differences of the hypergeometric equation under transformation
will be $1/k,1/\ell,1/m$. Example \ref{ExampleB7b} will be continued in \S \ref{sec:heun}.


%

Definitions \ref{df:jtfield}, \ref{df:jtfield2} will be used to  group the obtained Belyi functions 
into manageable classes.
%
%
The Belyi functions will be listed twice in this paper. The first list is Tables 2.3.7--3.4.4 
of \S \ref{sec:classify}. Its ordering by the $(k,\ell,m)$-triples and branching patterns
reflects the classification scheme. In Appendix \S \ref{sec:ajtables},  
the list of Galois orbits is grouped and ordered by the $j$-fields, $t$-fields, branching fractions.
%
%
This order allows quick recognition whether a given Heun function is reducible to 
a hypergeometric function with a rational argument $\varphi$.

Belyi functions nicely correspond to certain graphs 
called {\em dessins d'enfant}\footnote{Generally, a {\em dessin d'enfant} \cite{Wikipedia} is
a bi-colored graph  (possibly with multiple edges), with a cyclic order of edges around each vertex given.
This defines a unique (up to homotopy) embedding of the bi-colored graph into a Riemann surface.
Customarily, the vertex colors are black and white.
The dessins d'enfant of genus 0 Belyi coverings can be drawn on a plane,
as the Riemann sphere minus a point is homeomorphic to a plane.
Given a Belyi covering $\varphi$, its dessin d'enfant is 
realized as the pre-image of the interval segment $[0,1]\subset\RR\subset\CC$ 
onto its Rieman surface, with the vertices above $z=0$ colored black
and  the vertices $z=1$ colored white.  
The branching pattern of $\varphi$ determines the degrees (i.e., valencies) of vertices of both colors
of its dessin d'enfant, and the degrees of cells on the Riemann surface.
The cell degree is determined by counting vertices of one color while tracing its boundary.
The degree of a dessin d'enfant is the degree of the corresponding Belyi function.}.
Mimicking \cite[Section~2]{BeukersMontanus}, we spell out standard correspondences 
for genus 0 Belyi functions 
as follows. There are 1-1 correspondences between these objects:
\begin{enumerate} \itemseparate
\item[(I)] Belyi functions $\PP_z^1\to\PP_x^1$, 
up to M\"obius transformations \mbox{$x\mapsto (ax+b)/(cx+d)$}.
\item[(II)] Plane dessins d'enfant, 
up to homotopy on the Riemann sphere.
\item[(III)] The triples $(g_0,g_1,g_\infty)$ of elements in a symmetric group $S_d$, 
such that:
\begin{itemize} \itemseparate
\item $g_0g_1g_{\infty}=1$;
\item the total number of cycles in $g_0,g_1,g_\infty$ is equal to $d+2$;
\item $g_0,g_1,g_\infty$ generate a transitive action on a set of $d$ elements;
\end{itemize}
up to simultaneous conjugacy of $g_0,g_1,g_\infty$ in $S_d$.
\item[(IV)] Field extensions of $\overline{\QQ}(z)$ of 
genus 0,  unramified outside \mbox{$z=0,1,\infty$}.
\end{enumerate}
Part (III) gives the monodromy presentation of a Belyi covering, 
and $d$ is the degree. The dessins d'enfant is basically a graphical representation
of the combinatorial data in (III). 
This paper presents all obtained dessins pictorially, 
while the accompanying website \cite{HeunURL} 
gives  the Belyi maps (I), the permutations in (III) and other data (such as $j,t,r$-fields). 
The number $d+2$ is illuminated in the proof of Lemma \ref{th:klm}.
For each fiber $z\in\{0,1,\infty\}$, the conjugacy class of $g_z$ 
is determined by the partition of $d$ that reflects the branching pattern in the fiber.
Part (IV) is convenient for considering the composition structure of Belyi maps;
see Appendix \ref{sec:compose}.

The considered Belyi functions have rather regular dessins d'enfant. 
Definitions \ref{df:klmregular}--\ref{df:mnhyperbolic} are easy to reformulate for 
dessins d'enfant;
\begin{definition} \label{df:ddregular}
A dessin d'enfant is called  {\em $(k,\ell,m)$-minus-$n$-regular} if, 
with exactly $n$ exceptions,  all white vertices have degree $k$, 
all black vertices have degree $\ell$, and all cells have degree $m$.
\end{definition}
\begin{definition}
A dessin d'enfant $\Gamma$ is called {\em minus-$n$-hyperbolic} if:
\begin{enumerate}  \itemseparate
\item there are positive integers $k,\ell,m$ satisfying $1/k+1/\ell+1/m<1$ 
such that $\Gamma$ is $(k,\ell,m)$-minus-$n$-regular;
\item there is at least one white vertex of degree $k$, a black vertex of degree $\ell$,
and a cell of degree $m$.
\end{enumerate}
\end{definition}

All minus-4-hyperbolic  dessins d'enfants could be found by a combinatorial
search on a computer. But with our {\sf Maple} implementations it was faster 
to compute first the minus-4-hyperbolic Belyi functions,
and then compute their monodromy permutations in (III). 
This paper presents all minus-4-hyperbolic dessins
(up to complex conjugation), most of them next to the tables of Appendix \S \ref{sec:ajtables}.

In total, there are 872 Belyi functions of the minus-4-hyperbolic type,
up to M\"obius transformations in both $x$ and $z$.
They come in  366 
Galois orbits\footnote{Belyi functions are explicitly defined
over algebraic number fields, and the absolute Galois group Gal$(\overline{\QQ}/\QQ)$
permutes Belyi coverings with the same branching pattern. 
The size of a Galois orbit of dessins d'enfant is the degree of the moduli field; see \S \ref{sec:conic}.
Given a branching pattern, the set of Belyi coverings
with that branching pattern is finite (up to M\"obius transformations), possibly empty. 
The Galois action does not need to be transitive on this set, 
and several Galois orbits with the same branching pattern may appear.}.
In leap years we could decorate a calendar with the minus-4-hyperbolic dessins d'enfant,
one Galois orbit per day. We categorize and label the Galois orbits 
of the objects in (I)--(IV) as A1--J28; see \S \ref{sec:ajnumbers} and Appendix \S \ref{sec:ajtables}. 
The largest Galois orbit J28 has 15 dessins, for a $(2,3,7)$-minus-4 branching pattern of degree 37.
Completeness is checked with two independent algorithms
and other checks, see \S \ref{sec:algorithms} and Appendix \S \ref{sec:coxeter}.


\begin{vchfigure}
\vspace{-20pt}
\begin{picture}(375,232)(1,3)
\put(25,112){\includegraphics[height=108pt]{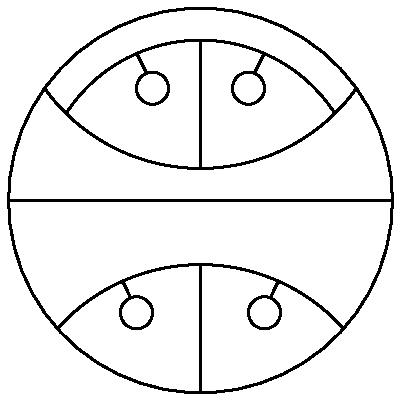}}
\put(140,112){\includegraphics[height=108pt]{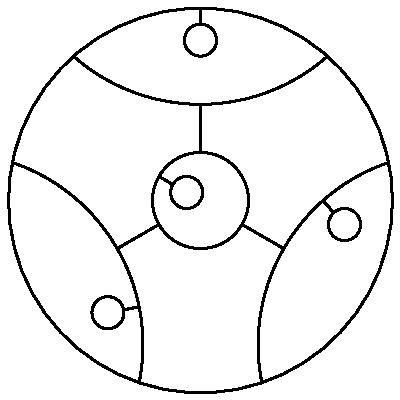}}
\put(255,112){\includegraphics[height=108pt]{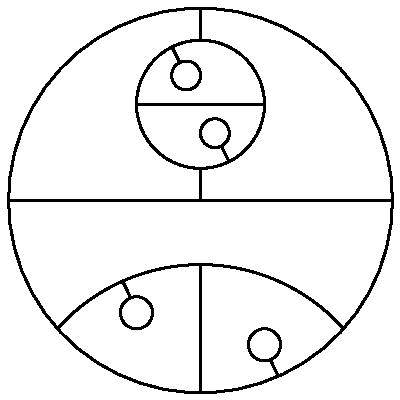}}
\put(25,0){\includegraphics[height=108pt]{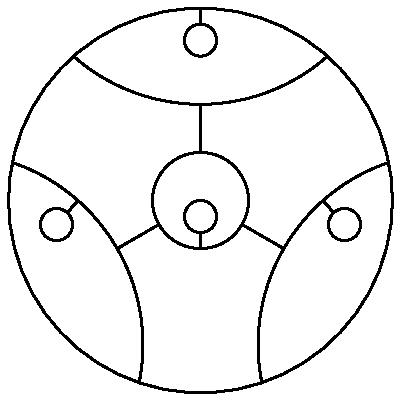}}
\put(140,0){\includegraphics[height=108pt]{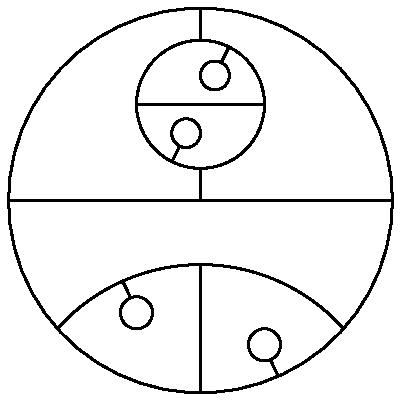}}
\put(255,0){\includegraphics[height=108pt]{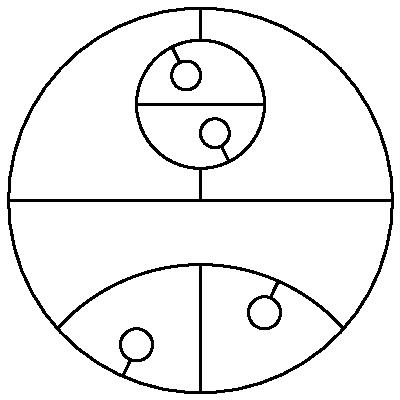}}
\put(0,166){H14} \put(0,51){H46} 
\end{picture}
\vchcaption{The degree 60 dessins d'enfant} \label{fg:deg60s}
\end{vchfigure}

The highest degree of a minus-4-hyperbolic Belyi function is 60.
Its branching pattern is $30\,[2]=20\,[3]=8\,[7]+1+1+1+1$.
There are two Galois orbits 
for this branching pattern, with three dessins each. 
We identify the two Galois orbits as H14 and H46.
The dessins d'enfant for these Belyi functions are depicted\footnote{The dessins in Figure \ref{fg:deg60s}
have all white vertices of order 2, hence they are examples of {\em clean} dessins d'enfant.
It is customary to depict clean dessins without white vertices, so that edges connect black vertices
directly, and loops are possible. A white vertex is then implied in the middle of each edge.}
in Figure \ref{fg:deg60s}. The 4 exceptional points in each dessin are represented by circular loops;
they could be assumed to lie in the center of each cell of degree 1.
The other cells (including the outer ones) have degree 7.
The left-most dessins of H14 and H46 clearly have a reflection symmetry,
hence they are defined over $\RR$. The other two dessins of H46 are mirror images
of each other, and are related by the complex conjugation.

The Belyi functions of degree 60 are composite.  
Their 15 and 30 components are labeled  H10 (for H14) and H46, J19 (for H45).
The Belyi functions H10, H14 are examples that have an {\em obstruction},
as  described in \S\ref{sec:conic}.
This has interesting geometric consequences for the dessins d'enfant. 
Although both have a totally real moduli field $\QQ(\cos\frac{2\pi}7)$, 
not all dessins of H10 and H14 have a reflection symmetry. Rather, 
the complex conjugation may give a homeomorphic dessin, 
identifiable with the original only after an automorphism of the Riemann sphere. For example, 
consider the middle and the right-most dessins of H14 in Figure \ref{fg:deg60s}. 
The dessins d'enfant for H10 are depicted in Figure \ref{fg:mdessins}, together with most of
other examples with an obstruction. 

\section{The branching patterns}
\label{sec:classify}

We enumerate the possible branching patterns in the same way as was done
for {\em parametric} hypergeometric-to-Heun transformations in \cite{HeunClass}.
To end up with a finite number of cases, we use Hurwitz formula and
the hyperbolic condition $1/k+1/\ell+1/m<1$.
Without loss of generality, we assume  the non-decreasing order $k\le \ell\le m$ 
for the regular branching orders from now on. 
\begin{lemma} \label{th:klm}
Let $\varphi$ be a minus-$4$-hyperbolic Belyi covering of degree $d$, 
with the regular branching orders $k\le\ell\le m\in\ZZ_{>0}$. Then
\begin{enumerate}  \itemseparate
\item There are exactly $d-2$ regular branchings and $4$ exceptional points.
\item $\displaystyle d-\left\lfloor \frac{d}k\right\rfloor-\left\lfloor \frac{d}\ell
\right\rfloor-\left\lfloor\frac{d}m\right\rfloor\le 2.$
\item 
Let $S$ denote the sum of $4$ branching fractions. Then
$\displaystyle d=\frac{2-S}{1-\frac1k-\frac1\ell-\frac1m}.$
\item $\displaystyle \left(1-\frac1k-\frac1{\ell}\right) m^2-3m+4\le 0.$
\item $\displaystyle \frac12\le\frac1k+\frac1\ell<1.$
\end{enumerate}
\end{lemma}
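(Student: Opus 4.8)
The plan is to handle the five parts in sequence, since each feeds the next, with the Riemann--Hurwitz formula as the single geometric input and everything else reducing to bookkeeping on the three fibers over $z=0,1,\infty$. Throughout I would write $r_1,r_0,r_\infty$ for the numbers of \emph{regular} points over $z=1,0,\infty$ (of orders $k,\ell,m$ respectively), and $A_1,A_0,A_\infty$ for the total branching orders of the exceptional points in those same fibers.

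For \refpart{i} I would apply Riemann--Hurwitz to $\varphi\colon\PP^1\to\PP^1$, whose total ramification is $2d-2$. Ramification occurs only over $\{0,1,\infty\}$, and within one fiber the local orders sum to $d$, so that fiber contributes $d$ minus its number of points to the ramification. Summing over the three fibers and equating to $2d-2$ shows that the total number $N$ of points lying over $\{0,1,\infty\}$ equals $d+2$; this is exactly the promised explanation of the number $d+2$. Since precisely $4$ of these points are exceptional, the number of regular branchings is $N-4=d-2$. Parts \refpart{ii} and \refpart{iii} are then immediate consequences: the regular points alone contribute at most $d$ to the degree of each fiber, giving $r_1\le\lfloor d/k\rfloor$, $r_0\le\lfloor d/\ell\rfloor$, $r_\infty\le\lfloor d/m\rfloor$, which combined with $r_1+r_0+r_\infty=d-2$ yields \refpart{ii} after rearrangement. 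For \refpart{iii} I would instead write these counts exactly as $r_1=(d-A_1)/k$ and so on; substituting into $r_1+r_0+r_\infty=d-2$ and recognizing $A_1/k+A_0/\ell+A_\infty/m$ as the sum $S$ of the four branching fractions produces $d\,(1-1/k-1/\ell-1/m)=2-S$.

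For \refpart{iv} I would combine \refpart{iii} with two elementary bounds. Condition \refpart{df:ConditionH2} guarantees a point of order $m$ over $z=\infty$, so $d\ge m$; since the denominator $1-1/k-1/\ell-1/m$ is positive by hyperbolicity, \refpart{iii} gives $m(1-1/k-1/\ell)\le 3-S$. Moreover each branching fraction is $A/B$ with $A\ge1$ and $B\in\{k,\ell,m\}$, hence $B\le m$ and $A/B\ge 1/m$, so $S\ge 4/m$. Feeding this in yields $m(1-1/k-1/\ell)\le 3-4/m$, and multiplying by $m$ gives the quadratic inequality of \refpart{iv}.

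For \refpart{v} the upper bound $1/k+1/\ell<1$ is immediate from hyperbolicity, since $1/k+1/\ell<1-1/m$. The lower bound is where I expect the \emph{main obstacle}: the discriminant of the quadratic in \refpart{iv} only forces $1/k+1/\ell\ge 7/16$, which is strictly weaker than $1/2$, so integrality of $k,\ell,m$ must genuinely be used. From \refpart{iv} one gets $1-1/k-1/\ell\le(3m-4)/m^2<3/m\le 3/k$ (using $m\ge k$); together with $1/\ell\le 1/k$ this forces $k<5$, and hyperbolicity excludes $k=1$, leaving $k\in\{2,3,4\}$. The case $k=2$ gives $1/k+1/\ell\ge 1/2$ automatically. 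For $k=3$ and $k=4$ I would exploit $m\ge\ell$ and the fact that $(3m-4)/m^2$ is decreasing for $m\ge 3$ to turn \refpart{iv} into a quadratic inequality in $\ell$ alone, which pins down $\ell\le 4$ in each case and hence $1/k+1/\ell\ge 1/2$. Thus the delicate point is that the clean discriminant argument is insufficient, and the bound $1/2$ emerges only from the short finite case check permitted by $k\le\ell\le m$ and integrality.
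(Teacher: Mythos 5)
Your proposal is correct, and for parts \refpart{i}--\refpart{iv} it follows exactly the paper's own argument: Hurwitz gives $3d-(2d-2)=d+2$ points over $\{0,1,\infty\}$, hence $d-2$ regular branchings; the floor bounds per fiber give \refpart{ii}; writing the regular counts exactly as $(d-A_1)/k$ etc.\ gives \refpart{iii}; and $d\ge m$ (from condition \refpart{ii} of the definition of minus-$4$-hyperbolic) together with $S\ge 4/m$ gives \refpart{iv} — the paper leaves the bound $S\ge 4/m$ implicit, which you rightly make explicit.

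Where you diverge is part \refpart{v}, and there the paper's route is shorter than yours. You correctly observe that the discriminant of the quadratic in \refpart{iv} only yields $1/k+1/\ell\ge 7/16$, but you then conclude that the bound $1/2$ ``emerges only from a finite case check'' over $k\in\{2,3,4\}$. That is not quite how the paper does it, nor is the case split necessary: hyperbolicity with $k\le\ell\le m$ integers forces $m\ge 4$ (if $m\le 3$ then $1/k+1/\ell+1/m\ge 1$), and since $(3m-4)/m^2$ is decreasing for $m\ge 3$ with value exactly $1/2$ at $m=4$, the inequality \refpart{iv} immediately gives $1-1/k-1/\ell\le(3m-4)/m^2\le 1/2$. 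So integrality is indeed essential, but it enters only through the single fact $m\ge 4$; no analysis of $k$ or $\ell$ is needed. Your case check ($k<5$ from $1-2/k<3/k$, then quadratics in $\ell$ for $k=3,4$) is valid — I verified both quadratics pin down $\ell\le 4$ — so your proof stands; it just replaces a one-line monotonicity argument by a three-case computation, and mislocates the source of the constant $1/2$, which comes from evaluating $(3m-4)/m^2$ at $m=4$ rather than from exhausting small $k$.
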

\begin{proof}
By Hurwitz formula (or \cite[Lemma 2.5]{VidunasFE}), 
there are $3d-(2d-2)=d+2$ distinct points above $\{0,1,\infty\}$ when $\varphi$ 
is a Belyi map $\PP^1\to\PP^1$. The first claim follows.
The number of regular branchings is at most 
$\left\lfloor d/k\right\rfloor+\left\lfloor d/\ell\right\rfloor+\left\lfloor d/m\right\rfloor$.
This gives the inequality in \refpart{ii}.
The number $d-2$ of regular branchings is also equal to 
$d/k+d/\ell+d/m-S$, giving the degree formula in \refpart{iii}.

We have $d\ge m$, otherwise condition \refpart{ii} of Definition \ref{df:mnhyperbolic} is not satisfied.
Combining this with the degree formula gives the inequality in \refpart{iv}. 
Together with $m\ge 4$, the inequality  in \refpart{iv} gives $1-1/k-1/\ell\le 1/2$. Part \refpart{v} follows.
\end{proof}

The inequalities in \refpart{iv}, \refpart{v} gives a finite list
of triples $(k,\ell,m)$.
Setting $S=4/m$ in part \refpart{iii} gives an upper bound for $d$, leaving the following
candidates for $(k,\ell,m,d)$:
\[
\begin{array}{cccc}
(2,3,7,\le60),  & (2,3,8,\le 36), & (2,3,9,\le 28),  & (2,3,10,\le 24),\\
(2,3,11,\le 21), & (2,3,12,\le 20), & (2,3,13,\le 18), & (2,3,14,\le 18),\\
(2,3,15,\le 17), & (2,3,16,\le 16), & (2,4,5,\le 24), & (2,4,6,\le 16), \\
(2,4,7,\le 13), & (2,4,8,\le 12), & (2,4,9,\le 11), & (2,4,10,\le 10), \\
(2,5,5,\le 12), & (2,5,6,\le 10), & (2,5,7,\le 9), & (2,5,8,\le 8), \\
(2,6,6,\le 8), & (2,6,7,\le 7), & (3,3,4,\le 12), & (3,3,5,\le 9),\\
(3,3,6,\le 8), & (3,3,7,\le 7), & (3,4,4,\le 6), & (3,4,5,\le 5),\\
(4,4,4,\le 4).
\end{array}
\]
The last two candidates give less than 4 exceptional points. 

Given a candidate tuple $(k, \ell, m, d)$, it is straightforward to enumerate the  correspondent
branching patterns. Let $h_1,h_0,h_{\infty}$ denote the eventual number of regular branchings
in the fibers $z=0,1,\infty$, respectively. Then we have
 \mbox{$h_1+h_0+h_{\infty}=d-2$}
by Lemma \ref{th:klm}\refpart{i}, and $0<h_1\le\lfloor d/k \rfloor$, etc. 
With a possible integer solution $(h_1,h_0,h_{\infty})$ at hand, we have to partition
the numbers $d-kh_1$, $d-\ell h_0$, $d-mh_\infty$ into total 4 positive parts,
not equal to the respective regular orders $k,\ell,m$. 
For example, if $(k,\ell,m,d)=(2,3,7,28)$ then we are lead to partition either 
7 into 3 parts (in the $m=7$ fiber) or 4 into 4 parts (in the $\ell=3$ fiber).
There are four such partitions of 7 and one of 4, hence five branching patterns.

In total, there are 378 branching patterns\footnote{One branching pattern $6\,[3]=9\,[2]=8+7+1+1+1$ is counted twice. It appears in Tables 2.3.7 and 2.3.8 because it is $(2,3,m)$-minus-4-regular
for two values of $m$ (=7 or 8).
It turns out, however, there are no Belyi functions with this branching pattern.} 
for minus-4-hyperbolic Belyi functions.
We list them in the first two columns of Tables 2.3.7--3.4.4,
by giving their branching fractions and the degree. 
The table titles refer to the tuple $(k,\ell,m)$.
The branching fractions are left unsimplified (e.g. $4/8$ instead of $1/2$)
to keep the fibers and branching orders of exceptional points plainly visible.  
The branching patterns are uniquely determined by the unsimplified branching fractions.

\subsection{Summary of computed results}
\label{sec:ajnumbers}

The last two columns of Tables 2.3.7--3.4.4 give information about found coverings with 
the considered branching patterns. 
Computation of Belyi coverings for each possible branching pattern is the most demanding step. 
The algorithms used to generate and verify the list of Belyi functions 
are presented in Section  \S \ref{sec:algorithms}.

The third column of Tables 2.3.7--3.4.4 gives a label for every Galois orbit with 
the branching pattern defined by a sequence of 4 branching fractions in the first column.
With the application to Heun equations in mind,  we group 
the Belyi functions by the $\QQ$-extension of the $j$-invariant. 
The cases with $j\in\QQ$ are further grouped by the $t$-field.
We group the computed minus-4-hyperbolic Belyi 
functions into the following 10 classes, labeled alphabetically  
from A to J:
\vspace{10pt} \\
\begin{tabular}{rp{10cm}}
A1--A24: & the Belyi functions with $j=1728$, that is $t\in\{-1,2,1/2\}$; \\
B1--B34: & the other Belyi functions with $t\in\QQ$; \\
C1--C42: & the Belyi functions with $j\in\QQ$ and a real quadratic $t$-field; \\
D1--D50: & the Belyi functions with $j\in\QQ$ and an imaginary quadratic $t$-field; \\ 
E1--E25: &  the Belyi functions with $j\in\QQ$, when the $t$-field  
is the splitting field (of degree 6 always)  of a cubic polynomial in $\QQ[z]$; \\
F1--F25: & the Belyi functions with a real quadratic $j$-field; \\ 
G1--G52: & the Belyi functions with an imaginary quadratic $j$-field; \\
H1--H53: & the Belyi functions with a cubic $j$-field; \\ 
I1--I33: & the Belyi functions with a $j$-field of degree 4 or 5;  \\
J1--J28: & the Belyi functions with a $j$-field of degree at least 6 (and  $\le15$).
\vspace{10pt} \\
\end{tabular}
In each class, the Galois orbits of Belyi functions 
are ordered by the criteria described in Appendix \S \ref{sc:criteria}.
A numbered label refers to a whole Galois orbit of
Belyi functions (or dessins d'enfant), 
as already mentioned in \S \ref{sec:deeper}. 
If there is more than one Galois orbit with the same branching pattern,
a line is devoted to each Galois orbit in Tables 2.3.7--3.4.4.


 
 \begin{table}
 \begin{minipage}{72mm}
\noindent \underline{\small \bf Table 2.3.7.  \hspace{156pt} \hfill \mbox{}  } \\[1pt] 

\caption{Statistics of Belyi maps}  \label{statistics}
\end{table}


The last column of the same tables gives basic information about the size of Galois orbits, 
$j$-fields, $t$-fields of the computed Belyi functions. 
The $j$-field is indicated as follows:
\begin{itemize} \itemseparate
\item by the field degree $d$, in the power notation $j^d$;
\item if the degree is 3, 4, 5 or 6, a minimal field polynomial 
\mbox{$X^d+a_{d-2}X^{d-2}+\ldots+a_1X+a_0$}
is indicated by $j^d(a_{d-2},\ldots,a_1,a_0)$;
\item if the field is quadratic, $j^2(\sqrt{a})$ means the field $\QQ(\sqrt{a})$;
\item if $j=0$, it is stated so;
\item for $j\in\QQ\setminus\{0\}$, no $j$-notation is given,
but the $t$-field and (possibly) the moduli field are indicated.
\end{itemize}
The $t$-field is specified as follows:
\begin{itemize} \itemseparate
\item if the $j$-field is indicated, 
the $t$-field degree $d$ is given (in the power notation $t^d$)
only if $j\neq 0$ and the $t$-field is an extension of the $j$-field;
\item if $j\in\QQ\setminus\{0\}$ and $t\in\QQ$, a value of $t$ is given in 
the factorized form (as motivated by \S \ref{sec:arithmetic}); 
\item if $j\in\QQ\setminus\{0\}$ and the $t$-field is quadratic, $t(\sqrt{a})$ means the field $\QQ(\sqrt{a})$;
\item if $j\in\QQ\setminus\{0\}$ and the $t$-field degree is greater than 2,
$t^{\rm spl}(a,b)$ means the splitting field (of degree 6 in our examples) of the polynomial
$X^3+aX+b$.
\end{itemize}
The size of the Galois orbit\footnote{Our notation allows to count the total number
of dessins d'enfant in selected Galois orbits rather quickly in Tables 2.3.7--3.4.4.
Each fourth column starts either with the $m^d$ or $j^d$ notation 
(where $d$ is the size of the Galois orbit), or a statement of no covering,
or starts with an indented data about $t$ or $j=0$. In the latter cases, $d=1$.}
is equal to the degree of the moduli field.Ê 
In most cases, the $j$-field and the moduli field coincide\footnote{
The moduli field always contains the $j$-field, 
as the $j$-value is an invariant of M\"obius transformations. 
In our encountered cases,  the moduli field is at most a quadratic extension of the $j$-field.}.
The moduli field is indicated only if it differs from the $j$-field,
either with $m^2(\sqrt{a})$ if its degree is $2$, or with $m^d$ if its degree $d>2$.
Statistics of computed Belyi coverings is presented in Table \ref{statistics}.


More information about each computed Galois orbit can be found in the tables 
of Appendix \S \ref{sec:ajtables} and our website \cite{HeunURL}.
In order to compute and simplify the whole set of minus-4-hyperbolic Belyi functions,
and to obtain interesting additional information about them,
we used the computer algebra package {\sf Maple 15}, 
the {\sf polredabs} command of {\sf GP/PARI},
and had to implement several algorithms.
The main work of computing the Belyi functions is described in \S \ref{sec:algorithms}.
Here is a list of additional handled problems, sorted roughly by the amount of work involved:
\begin{itemize}
\item Given a minus-4-hyperbolic Belyi function, 
compute its branching type, its $t$-value, $j$-invariant,
the canonical realization field, and moduli field.
\item Given two triples $(g_1,g_0,g_{\infty})$ of elements in $S_d$ as in (III) of \S \ref{sec:deeper},
decide whether they represent the same dessin d'enfant up to conjugacy in $S_d$.
\item In the {\em obstructed} cases as described in \S \ref{sec:conic},
compute the obstruction conic and a conic-model (if possible).
\item Find possible decompositions of a Belyi function $\varphi(x)$ into smaller degree 
rational functions. 
\item Given a Belyi function $\varphi\in K(x)$ and an embedding $K \rightarrow \CC$,
compute the dessin d'enfant $\varphi$ under this embedding.
\item Find a M\"obius-equivalent Belyi function $\tilde{\varphi}(x)$ of substantially
smaller bitsize, if possible.
\end{itemize}
Algorithmic solution of less trivial problems of this list is elaborated in \cite[\S 3, \S 4]{BelyiKit}.
The implementations and computed data are available at \cite{HeunURL}.

A portion of computed Belyi functions has been known, inevitably. 
Most notably, the Belyi covering G45 defined over $\QQ(\sqrt{-11})$ has the monodromy group 
isomorphic to the sporadic  Mathieu group $M_{12}$. Its humanoid dessin d'enfant 
is called {\em Monsieur Mathieu}; see the appendix dessins.
The Galois orbits A19, G47, F11 are considered in \cite{Filimonenkov} and \cite[Example 5.7]{zvonkin10}.
The C30 dessin (turned $90^0$ in Figure \ref{fg:mdessins} here) 
appears in \cite{CouveignesGranboulan} as a {\em 
rabbit with a lopped off left ear and a sidelong smirk on the right hand side.}
The degree 24 coverings with $(k,\ell)=(2,3)$ were computed in \cite{BeukersMontanus}. 

An important area where Belyi functions appear is Shimura curves \cite{elkies237}, \cite{voightg2},
\cite{tuyang}. Checking the list of low genus 
Shimura curves ${\cal X}_0(n)$ in \cite{voightg2}, we recognize our H10, H11, H12
as Belyi coverings for the congruence groups 
$\Gamma_0(29), \Gamma_0(43), \Gamma_0(13)\subset PSL(2,{\cal O})$,
where $\cal O$ is the quaternion order over $\QQ(\mbox{Re } \zeta_7)$ considered in \cite{elkies237}.
H1 is a Belyi covering for $\Gamma_0(19)$ of similar quaternions over $\QQ(\mbox{Re } \zeta_9)$. 
The coverings A2, A6, A7, A8, A16, A19, A20, C1 appear in diagrams III, VI, XI in \cite{tuyang}.
Minus-4-hyperbolic Belyi functions appear in coverings of classical modular curves as well.
Checking the Cummins-Pauli online list \cite{CumminsPauli} of genus 0 congruence subgroups
of $PSL(2,\ZZ)$, we recognize A5, A18, B19, C1, D8, D19, G13, G35, G39 as coverings for the
congruence subgroups 
8I$^0=\Gamma_1(8)$, 8G$^0$, 10A$^0$, 8D$^0$, 8E$^0$, 9C$^0$, 8A$^0$, 7A$^0$, 7C$^0$,
respectively. 
Any Belyi covering gives a modular curve with respect to some (not necessarily congruence)
subgroup of $PSL(2,\ZZ)$, since $\Gamma(2)\subset PSL(2,\ZZ)$ is a free group on two generators
\cite{voightg2}. The minus-4-hyperbolic functions tend to give Shimura curves corresponding to 
manageable  non-congruence subgroups. Our computational routine \cite[\sf ComputeBelyi.mpl]{HeunURL} 
can be used to investigate genus 0 Shimura curves more thoroughly. 

\section{Application to Heun functions}
\label{sec:heun}

The minus-4-hyperbolic Belyi functions have application to transformations between
hypergeometric and Heun functions (or their differential equations). 
This allows to express some Heun functions in terms of better understood
hypergeometric functions. In fact, we utilize this application
in our algorithms to compute the Belyi functions.

The Gauss hypergeometric equation
\begin{equation} \label{HGE}
\frac{d^2y(z)}{dz^2}+
\left(\frac{C}{z}+\frac{A+B-C+1}{z-1}\right)\,\frac{dy(z)}{dz}+\frac{A\,B}{z\,(z-1)}\,y(z)=0
\end{equation}
and the Heun differential equation
\begin{equation} \label{Heun}
\frac{d^2y(x)}{dx^2}+\biggl(\frac{c}{x}+\frac{d}{x-1}+\frac{a+b-c-d+1}{x-t}\biggr)\frac{dy(x)}{dx}
+\frac{abx-q}{x(x-1)(x-t)}y(x)=0
\end{equation}
are second order Fuchsian equations \cite{Wikipedia} with 3 or 4 singularities, respectively.
The singular points are $z=0,1,\infty$ and $x=0,1,t,\infty$. 
If $C\not\in \ZZ$, a basis of local solutions of (\ref{HGE}) at $x=0$ is given
by the famous {\em Gauss hypergeometric series}: 
\begin{equation} \label{eq:locsols}
z^0\;\cdot\;\hpg{2}{1}{A,\,B}{C}{\,z}, \qquad z^{1-C}\;\cdot\;\hpg{2}{1}{\!1+A-C,\,1+B-C}{2-C}{\,z}.
\end{equation}
The starting powers $0,1-C$ of the local parameter $z$ are the {\em local exponents} at $z=0$.
The local exponents at $z=1$ are $0,C-A-B$, while the exponents at $z=\infty$ are $A,B$. 
The local exponents for Heun's equation (\ref{Heun}) are
\begin{align*}
\mbox{at } x=0: & \quad 0, 1-c; &  \mbox{at } x=\infty: & \quad a,b;\\
\mbox{at } x=1: & \quad 0, 1-d; & \mbox{at } x=t: & \quad 0, c+d-a-b.
\end{align*}
The local solution at $x=0$ with the exponent $0$ is denoted by
\begin{equation} \label{eq:HF}
\heun{\,t\,}{q}{\,a,\,b\,}{c;\,d}{\,x\,}.
\end{equation}
The parameter $q$ is an {\em accessory parameter}; it does not influence the local exponents.
If $c\not\in\ZZ$, then an independent local solution at $x=0$ is
\begin{equation} \label{eq:HF2}
x^{1-c}\,\heun{t}{q_1}{a-c+1,\,b-c+1}{2-c;\,d}{x}
\end{equation}
with $q_1=q-(c-1)(a+b-c-d+d\,t+1)$. 

A {\em pull-back transformation} of a differential equation for $y(z)$ in $d/dz$ 
has the form
\begin{equation} \label{algtransf}
z \longmapsto\varphi(x), \qquad y(z)\longmapsto Y(x)=\theta(x)\,y(\varphi(x)),
\end{equation}
where $\varphi(x)$ is a rational function, and $\theta(x)$ is
a radical function (an algebraic root of a rational function).
Geometrically, the transformation {\em pulls-back} a
differential equation on $\PP^1_z$ to a
differential equation on $\PP^1_x$, with
respect to the covering $\varphi:\PP^1_x\to\PP^1_z$
determined by the rational function $\varphi(x)$.

Pull-back transformations between hypergeometric and Heun equations
give identities between the classical Gauss hypergeometric and Heun functions.
For example, we have 
\begin{align} \label{HeunG}
\heun9{7/9}{1/3, 1}{7/9; 2/3}{x} = \theta(x)\;
\hpg21{1/36, 13/36}{8/9}{\varphi(x)},
\end{align}
where $\varphi(x)$ is as in Example \ref{ExampleB7b}, 
and $\theta(x)=(1-x)^{-1/36} \left( {\textstyle 1-x-\frac83 x^2 - \frac8{27}x^3 } \right)^{-1/12}$.
The transformation of singularities and local exponents 
for Fuchsian equations is explained in \cite[Lemma 2.1]{HeunClass}. 
The prefactor $\theta(x)$ shifts the local exponents at some points, but does not change 
the exponent difference anywhere. The rational function $\varphi(x)$ multiplies the local exponents
and their differences by the branching order at each point. If $Q$ is a singularity of the starting 
Fuchsian equation in $d/dz$, a point $P$ above $Q$ will be non-singular for the pulled-back equation
only if the branching order at $P$ is $n$ and the exponent difference at $Q$ is equal to $1/n$
(and $Q$ is not a logarithmic point when $n=1$).
For example, the $\hpgo21$ function in (\ref{HeunG}) solves a hypergeometric equation in $z,d/dz$
with the exponent differences $1/9,1/2,1/3$ at $z=0,1,\infty$, respectively,
while the exponent differences  for the pulled-back Heun equation 
are the branching fractions $2/9,1/3,1/9,2/3$ at $x=0,1,t,\infty$, respectively.
The roots of $8x^3-72x^2-27x+27$ became non-singular after the proper choice of $\theta(x)$.
By the way, the rational function $\varphi(x)$ of Example \ref{ExampleB7b} is identified in our classification
by the label B7 in Table 2.3.9 of \S \ref{sec:ajnumbers} and in Appendix \S \ref{sec:ajtables}.

Recently, {\em parametric} 
transformations between Heun and hypergeometric equations 
without Liouvillian  solutions\footnote{Liouvillian solutions \cite{Wikipedia} 
of second order linear differential equations
are the ``elementary" solutions: power, algebraic, exponential, trigonometric  functions, 
their integrals (in particular, logarithmic and inverse trigonometric functions).
They can be written in the form $y = {\rm exp}(\int r)\,\hpgo21(\varphi)$,
where  $r, \varphi$ are rational functions, $\mbox{}_2F_1$ is Gauss' hypergeometric function
with a reducible, dihedral or finite monodromy. There are algorithms to find Liouvillian solutions
in this form \cite{kleinvhw}, hence a table of pull-back transformations is not needed.
The hyperbolic restriction $1/k+1/\ell+1/m<1$ gives a finite list of $(k,\ell,m)$-minus-4-regular 
Belyi functions, while $1/k+1/\ell+1/m\ge 1$ would lead to infinitely many 
Belyi functions because of applications to Liouvillian functions.}
were classified  in \cite{HeunClass}, 
\cite{HeunForm}. 
They apply to hypergeometric equations where at least one exponent difference is not restricted
to a value $1/n$ with $n\in\NN$; hence a parameter.  In total, there are 61 parametric transformations 
up to the well known symmetries of hypergeometric 
and Heun equations \cite{HeunForm}.
But the number of Galois orbits of utilized Belyi coverings (up to M\"obius transformations) is 48. 
These Belyi functions are listed in \cite[Table 4]{HeunClass}. 
They satisfy condition \refpart{i} but not \refpart{ii} of Definition \ref{df:mnhyperbolic}, 
because the parameter(s) could be specialized to satisfy the hyperbolic condition. 
The parametric transformations are labeled P1--P61 in  \cite{HeunForm},
following similar criteria as in Appendix \S \ref{sc:criteria} here.
The Belyi functions of this article complete the list of hypergeometric-to-Heun transformations
when no Liouvillian solutions are involved.

\begin{remark} \label{rm:noexist} 
Non-existence of Belyi functions with some branching patterns can be proved 
by non-existence of implied transformations of Fuchsian equations \cite[\S 5]{HeunClass}. 
For example, there is no $(2,3,10)$-minus-$4$ Belyi function with the branching pattern
$9\,[2]=6\,[3]=[10]+2+2+2+2$, because it would also be a $(2,3,2)$-minus-$1$ function. 
It would pull-back a hypergeometric equation with
the exponent differences $1/2,1/3,1/2$ to a non-existent Fuchsian equation with a single 
singularity where the exponent difference is not $1$ (but $5$). This example illustrates that there are
no $(k,\ell,m)$-minus-$1$ functions, unless $1\in\{k,\ell,m\}$. In the exceptions, the implied
hypergeometric equation must have a logarithmic singularity with the exponent difference $1$.
In particular, the polynomial $(x^d+1)^k$ is a $(k,1,dk)$-minus-$1$ Belyi function,
and $(x^3-3x)^{2k}/(x^2-2)^{3k}$ is a $(2k,1,3k)$-minus-$1$ Belyi function.
\end{remark}

An interesting observation is that the pull-back covering $\varphi(x)$ can be recovered 
from local solutions of the related hypergeometric and Heun equations, if only an oracle would tell
us one constant. Particularly, suppose that the point $x=0$ of Heun's equation 
lies above the singularity $z=0$ of hypergeometric equation.
Let $y_1,y_2$ denote the hypergeometric local solutions in (\ref{eq:locsols}), respectively,
and let $Y_1,Y_2$ denote the Heun local solutions in (\ref{eq:HF}), (\ref{eq:HF2}), respectively.
We have the formula $Y_1(x)=\theta(x)\,y_1(\varphi(x))$ like (\ref{HeunG}), 
and a similar formula \cite[Lemma 3.1]{HeunForm} relating $y_2,Y_2$
but normalized by a constant $K$ that depends on the first power series term of $\varphi(x)$.
The quotient $\psi_1(x)=Y_2/Y_1$ does not depend on the prefactor $\theta(x)$,
and can be identified with the respective quotient $\psi_0(z)=y_2/y_1$ up to the constant multiple $K$.
We have \mbox{$\psi_1(x)=x^{1-c}\,(1+O(x))$} and $\psi_0(z)=z^{1-C}(1+O(z))$.
The identification $\psi_0(z)=K\psi_1(x)$ gives $z=\psi_0^{-1}(K\psi_1(x))$.
Therefore, the Belyi covering $\varphi(x)$ the composition of  the inverse function 
$\psi_0^{-1}$ with $\psi_1(x)$ multiplied by the proper constant $K$.
For instance, the Belyi function of Example \ref{ExampleB7b} can be computed by inverting the function
\[
z^{1/9}\,\hpg21{5/36, 17/36}{10/9}{z} \left/ \hpg21{1/36, 13/36}{8/9}{z} \right.
\]
and composing with
\[
(K x^2)^{1/9}\,\heun9{187/81}{5/9, 11/9}{11/9; 2/3}{x} \left/ \heun9{7/9}{1/3, 1}{7/9; 2/3}{x} \right.
\] 
where $K=-64/3$. The ratio of two independent solutions of the same differential equation 
of order 2 is called a {\em Schwarz map} of the differential equation. 
We consider the Schwarz maps\footnote{In the general context of Fuchsian equations
related by a pull-back transformation, the pull-back covering can be similarly recovered by
a proper identification  (up to a constant multiple) of Schwarz maps as well.
In fact, our implemented algorithms often assume a pull-back of a hypergeometric equation
to a Fuchsian equation with 4 singularities (rather than canonically normalized Heun's equation),
so to avoid unnecessaary extensions of the moduli field. This is done when two or more branching fractions
are equal and represent points in the same fiber, as demonstrated by the polynomial $W$
in Example \ref{ex:deg54}. Instead of the constants $t,q,K$ in \S \ref{ProgramH}, 
the constants $j,q,K$ were generally used. }
again in Appendix \S \ref{sec:coxeter}.

This observation is significant in a few ways. Firstly, a data base of our Belyi functions could be given by
the data of Heun equations to which they apply (the exponent differences, the parameters $q, t$),
the hyperbolic type $(k,\ell,m)$, and the constant $K$. The Belyi coverings would be then recovered by
reconstructing a rational function from a power series. If $d$ is the degree of a Belyi covering,
$2d+8$ power series terms would suffice (and exclude most of false rational reconstructions). 
Secondly, given a branching pattern 
(and thus the exponent differences of presumably related Heun and hypergeometric equations),
the Belyi coverings $\varphi(x)$ could be computed by assuming undetermined constants $t,q,K$ 
and finding algebraic restrictions between them for reconstruction of $\varphi(x)$ from the power series
of $\psi_0^{-1}(K\psi_1(x))$. This approach does not appear practical, but \S \ref{ProgramV} presents a deterministic algorithm that uses an implied Heun-to-hypergeometric transformation in a similarly
general way, and eliminates all undetermined variables except 3 before calling Gr\"obner basis routines.
And thirdly, our probabilistic algorithm \S \ref{ProgramH} searches through all possible $t,q,K$ in finite fields,
reconstructs possible minus-4-hyperbolic Belyi functions over considered finite fields,
and uses a version of Henzel lifting to produce Belyi functions in $\overline{\QQ}(x)$.

\section{Computation of Belyi coverings}
\label{sec:algorithms}

The list of minus-4-hyperbolic Belyi functions was originally generated by a probabilistic algorithm
by a thorough examination of Heun functions and their Schwarz maps over some finite fields,
and lifting, identifying the obtained Belyi functions in $\overline{\QQ}(x)$.
This is explained in \S \ref{ProgramH}. The complete list was generated 
by considering at most 7 finite fields $\FF_p=\ZZ/(p)$ for $p< 960$, 
though eventually we kept the algorithm running for total 100 primes. 
In principle, this does not ensure completeness of the list however.

The deterministic algorithm in \S \ref{ProgramV} takes a branching pattern as an input,
and produces the Belyi coverings with that branching pattern. By using the implied
Heun-to-hypergeometric transformations,  smaller degree algebraic systems for undetermined 
coefficients are obtained than with straightforward methods, 
and with far less {\em parasitic} solutions \cite{kreines}.
The deterministic algorithm produced the same Belyi maps (up to M\"obius transformations)
as the probabilistic one. Completeness of our results is proved assuming correct implementation 
of the deterministic algorithm.

As a practical matter of confidence, the completeness of results is foremost verified by the same output
of the two independent algorithms. In addition, we did a combinatorial search to find 
all minus-4-hyperbolic dessins d'enfant, up to degree $36$.
This gives a verification of a large part ($\approx 95\%$) of relevant branching patterns,
covering $\approx 91\%$ of obtained dessins.
We also compared the list of Belyi functions with the $r$-field in $\RR$ 
with Felikson's list \cite{Felixon} of {\em Coxeter decompositions} in the hyperbolic plane; 
see Appendix \S \ref{sec:coxeter}. This provides enough confidence in completeness of our results.

\subsection{A probabilistic modular method}
\label{ProgramH}

The used probabilistic algorithm is based on the expectation that
a Belyi function with any realization field 
will be properly defined over a $p$-adic field $\QQ_p$ for some prime $p$
among a sequence of considered subsequent or random primes. 
Concretely, suppose that a Belyi function $\varphi(x)$ pulls-back a hypergeometric
equation to Heun's equation with specific parameters $t,q$, and that
respective Schwarz maps of both equations are identified by a constant $K$
as described after Remark \ref{rm:noexist}. 
If $t,q,K$ are elements of a number field $\QQ(\alpha)$, then $\varphi(x)\in\QQ(\alpha)(x)$. 
By Chebotarev's theorem \cite{Wikipedia},
the minimal polynomial for $\alpha$ has a root in $\FF_p$ 
for a positive density of primes $p$. The density is at least $1/D$, where $D$ is the degree
of the number field $\QQ(\alpha)$. For all but finitely many of those primes,
we will have $\alpha\in\QQ_p$ and $t,q,K\in\ZZ_p$ (the $p$-adic integers).
The Belyi function $\varphi(x)$ can be found as follows:
\begin{enumerate}
\item Consider all possible values $\overline{t},\overline{q},\overline{C}\in\FF_p$ 
of the reduction of $t,q,K$ modulo an (eventually) suitable prime $p$;
\item Reconstruct $\varphi(x)$ as a rational function in $\FF_p[x]$ by identifying the Schwarz maps
as described after Remark \ref{rm:noexist}. We need the first $2d+8$ terms in the Schwarz maps
$\psi_0(x)$ and $\psi_1(x)$ to be in $\FF_p$, so $p$ has to be sufficiently large.
For example, if a local exponent difference is $1/3$, 
then we need $p > 3 (2d+8)$ to ensure that local hypergeometric solutions 
have the first $2d+8$ coefficients in $\ZZ_p$. For degree 60 coverings, 
the starting prime was $907>7(2\cdot60+8)$.
\item Use Hensel lifting to obtain an expression of $\varphi(x)$ in $\QQ_p$;
\item Use LLL techniques to compute minimal polynomials of its coefficients, 
thus reconstructing $\varphi(x)$ as an element of $\QQ(\alpha)(x)$.
\end{enumerate}

Our strategy is as follows. For each branching pattern of Tables 2.3.7--3.4.4,
we run through a sequence of primes $p$ and the possible reduced values
$\overline{t},\overline{q},\overline{C}\in\FF_p[x]$.
For each of the $O(p^2)$ pairs of $\overline{t},\overline{q}$  we have to compute series expansions
for the solutions of $H_0$ and $H_1$.  This is done rapidly using linear recurrences for
coefficients of these solutions; {\sf Maple} has the command {\sf gfun[diffeqtorec]} for getting
the recurrences. We expect $\varphi$ to be in $\FF_p[[x]]$ for suitable primes $p$.
If $\psi_0\circ\varphi$ matches $K \psi_1$ in $\QQ_p[[x]]$, then this poses
certain necessary conditions\footnote{If $\varphi = \lambda x^m + \cdots \in \ZZ_p[[x]]$ is substituted into
$\psi_0 = x^{d_1}(1+a_1 x + a_2 x^2 + \ldots)$, with $a_i \in \QQ_p$, and if the first $a_i \not\in \ZZ_p$ is $a_n$, and if $\psi_1=\lambda^{d_1} x^{d_1 m}(1+b_1x + b_2 x^2 + \ldots)$ is the result of the substitution, 
then the first $b_i \not\in \ZZ_p$ must be $b_{mn}$.} on the $p$-adic valuations of the coefficients of $\psi_0$ and $\psi_1$. We compute the series solutions of $H_0$ and $H_1$ to enough precision so
that we can test these necessary conditions.  This way, many pairs $\overline{t},\overline{q}$ can be discarded, and we typically end up with $O(p^1)$ pairs.  Thus, the rational reconstruction step \refpart{ii}
``only'' needs to be called for $O(p^2)$ combinations of $\overline{t},\overline{q},\overline{K}$.

If we find a $\varphi$ mod $p$, we store it in a file. Another program will Hensel lift it,
apply LLL reconstruction to $\QQ(\alpha)(x)$, and compare with the already computed data base.
Each Belyi map $\varphi$ has a density $\delta_\varphi$ of suitable primes. 
The expectation number of times that the same $\varphi$ will be found is then $100 \cdot \delta_\varphi$.  
Unless the density is tiny, the likelihood that $\varphi$ will be found is very high.
The smallest $\delta_\varphi$ encountered was $1/6$, for the H10--H14 
coverings\footnote{The estimate $\delta_\varphi \geq 1/{\rm deg}\;\QQ(\alpha)$ 
is sharp when $\QQ(\alpha)\supset\QQ$ is a Galois extension. 
This is the case for $\QQ( \zeta_7 )$. 
Higher degree encountered number fields (such as for J28) had significantly higher $\delta_\varphi>1/6$. }
with the realization field $\QQ( \zeta_7)$. 
Most of the table was found after just two primes.  
The first 10 primes took about a week on {\sf Maple}, running on 8 Intel X3210 CPU cores.
Among the 100 primes, each Belyi function was found at least  16 times.

The modular method is quite slow, because $O(p^2)$ combinations of 
$\overline{t},\overline{q},\overline{C}$ have to be inspected for each $p$.
But its advantage is low requirement of computer memory. This means that
the computation can continue for weeks on end, 
without a risk that the computation will halt due to memory problems,
and without human intervention (this is important, because if human intervention 
is needed in any of the steps, then, in a table with hundreds of cases,
a gap would become likely).

%

\subsection{A deterministic algorithm}
\label{ProgramV}

A $(k,\ell,m)$-minus-$4$ Belyi function is determined by a polynomial identity
\begin{equation} \label{eq:belyiabc}
P^\ell\,U=Q^{m}\,V+R^k\,W,
\end{equation}
where $P,Q,R$ are monic polynomials in $\CC[x]$ whose roots 
are  the regular branchings, and $U,V,W$ are polynomials whose roots are exceptional points
with correct multiplicities. The Belyi function is then expressed as
\begin{equation} \label{eq:varphis}
\varphi(x)=\frac{P^\ell \, U}{Q^m V}, \qquad 
1-\varphi(x)=\frac{R^k \, W}{Q^m V}.
\end{equation}
The polynomials $P,Q,R$ should not have multiple roots; $V$ may be monic.
The degrees of the polynomials in (\ref{eq:belyiabc}) are determined by the branching pattern
and the assignment of $x=\infty$.
The most straightforward computational method is to assume undetermined coefficients 
of the polynomials in (\ref{eq:belyiabc}), and solve the resulting system of algebraic equations
between the coefficients. This is not practical for Belyi functions of degree $\ge12$,
mainly because of numerous {\em parasitic} \cite{kreines} solutions where some polynomials 
in (\ref{eq:belyiabc}) have common roots.

A more restrictive set of equations for undetermined coefficients 
can be obtained by differentiating $\varphi(x)$, as 
comprehensively described in \cite[\S 2.1]{BelyiKit}.
In particular, the roots of $\varphi'(x)$ include the branching points 
above $\varphi=1$ with the multiplicities reduced by 1.
A factorized shape of the logarithmic derivative of $\varphi(x)$ and $\varphi(x)-1$ 
must be the following: 
\begin{eqnarray} \label{logd1}
\frac{\varphi'(x)}{\varphi(x)}=h_1 \frac{R^{k-1}\,W}{P\,Q\,F}, \qquad\qquad
\frac{\varphi'(x)}{\varphi(x)-1}=h_2 \frac{P^{\ell-1}\,W}{Q\,R\,F}.
\end{eqnarray}
Here $h_1, h_2$ are constants, and  $F$ is the product of irreducible factors 
of $U\,V\,W$, each to the power 1. On the other hand,
\begin{eqnarray} \label{logd2}
\frac{\varphi'(x)}{\varphi(x)}=\ell\,\frac{P'}{P}+\frac{U'}{U}-m\,\frac{Q'}{Q}-\frac{V'}{V}, \qquad 
\frac{\varphi'(x)}{\varphi(x)-1}=k\,\frac{R'}{R}+\frac{W'}{W}-m\,\frac{Q'}{Q}-\frac{V'}{V}.
\end{eqnarray}
We have thus two expressions for both logarithmic derivatives, 
of $\varphi(x)$ and $\varphi(x)-1$. 
As shown in \cite[\S 2.1]{BelyiKit}, this gives a generally stronger over-determined set 
of algebraic equations, of smaller degree and with less parasitic solutions.
If $k=2$, the polynomial $R$ can be even eliminated symbolically.

To get an even more restrictive system of algebraic equations, we utilize the fact that
our Belyi functions transform hypergeometric equations to Heun equations. 
The method bluntly uses the following lemma.
\begin{lemma} \label{lm:gpback}
Let $\varphi(x)$ be a Belyi map as in $(\ref{eq:varphis})$.
Hypergeometric equation $(\ref{HGE})$ with
\[
A=\frac12\left(1-\frac1k-\frac1{\ell}-\frac1m\right), \quad
B=\frac12\left(1-\frac1k-\frac1{\ell}+\frac1m\right), \quad
C=1-\frac{1}{\ell}.
\]
is transformed to the following differential equation 
under the pull-back transformation
$z\mapsto \varphi(x)$, $y(z)\mapsto \left(Q^{m} V\right)^{\!A} \, Y(\varphi(x))$:
\begin{align*}
& \frac{d^2Y(x)}{dx^2}+
\left(\frac{F'}{F}-\frac{U'}{\ell\,U}-\frac{V'}{m\,V}-\frac{W'}{k\,W}\right) \frac{Y(x)}{dx} + \\
& \! + A \left[ B \left( \frac{ h_1h_2\,P^{\ell-2}R^{k-2}\,U\,W}{Q^2 F^2}
- \frac{m^2Q'{}^2}{Q^2} -\frac{V'{}^2}{V^2} \right)+\frac{mQ''}{Q}+\frac{V''}{V}
+\qquad \right. \nonumber \\
& \qquad \left. +\left(\frac1k+\frac1\ell\right)\!\frac{mQ'V'}{Q\,V}
+\left(\frac{m Q'}{Q}+\frac{V'}{V}\right) \!
\left(\frac{F'}{F}-\frac{U'}{\ell\,U}-\frac{V'}{V}-\frac{W'}{k\,W}\right)
\right]Y(x)=0.
\end{align*}
\end{lemma}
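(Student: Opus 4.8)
The plan is to treat this as a direct pull-back computation and then recognize the output via the factorizations (\ref{logd1})--(\ref{logd2}) and the parameter relations $C=1-1/\ell$, $A+B=1-1/k-1/\ell$, $B-A=1/m$ (equivalently $2A=1-1/k-1/\ell-1/m$). First I would record the standard fact that if $y''+p(z)y'+q(z)y=0$ is pulled back by $z=\varphi(x)$ together with $Y(x)=\theta(x)\,y(\varphi(x))$ as in (\ref{algtransf}), then $Y$ solves $Y''+\tilde p\,Y'+\tilde q\,Y=0$ with $\tilde p=p(\varphi)\varphi'-\varphi''/\varphi'-2\theta'/\theta$ and $\tilde q=q(\varphi)(\varphi')^2-\bigl(\varphi''/\varphi'-p(\varphi)\varphi'\bigr)\theta'/\theta-\theta''/\theta+2(\theta'/\theta)^2$; this is just the chain rule applied twice followed by $y=Y/\theta$. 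Here the operative prefactor is $\theta(x)=(Q^mV)^{-A}$ (so that the hypergeometric solution is written $y(\varphi(x))=(Q^mV)^{A}Y(x)$): the exponent $-A$ is exactly what shifts the two exponents $A,B$ at $z=\infty$ into $0$ and $m(B-A)=1$ at each point of order $m$ above $z=\infty$, rendering those points ordinary.

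The first-order coefficient is the easy step. Since $p(z)=C/z+(A+B-C+1)/(z-1)$ with $C=1-1/\ell$ and $A+B-C+1=1-1/k$, one has $p(\varphi)\varphi'=(1-1/\ell)\,\varphi'/\varphi+(1-1/k)\,\varphi'/(\varphi-1)$, and I would expand $\varphi'/\varphi$ and $\varphi'/(\varphi-1)$ by (\ref{logd2}) and $\varphi''/\varphi'=(\log\varphi')'$ from $\varphi'=h_1P^{\ell-1}R^{k-1}UW/(Q^{m+1}VF)$, together with $\theta'/\theta=-A(mQ'/Q+V'/V)$. Then I would check term by term that the $P'/P$ and $R'/R$ contributions cancel, that $U'/U$ and $W'/W$ collapse to $-U'/(\ell U)$ and $-W'/(kW)$, that the $Q'/Q$ terms cancel outright, and that the $V'/V$ terms combine to $-V'/(mV)$; the last two cancellations both invoke $2A=1-1/k-1/\ell-1/m$. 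This reproduces the stated coefficient $F'/F-U'/(\ell U)-V'/(mV)-W'/(kW)$.

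For the zeroth-order coefficient I would first isolate $q(\varphi)(\varphi')^2=AB\,(\varphi'/\varphi)\,(\varphi'/(\varphi-1))$, which by (\ref{logd1}) equals $AB\,h_1h_2\,P^{\ell-2}R^{k-2}UW/(Q^2F^2)$; this is precisely the leading term inside the bracket of the claimed formula. It is then convenient to set $g:=mQ'/Q+V'/V$, so that $\theta'/\theta=-Ag$ and $\theta''/\theta=-Ag'+A^2g^2$, and to substitute the value of $\tilde p$ already found. A short rearrangement yields $\tilde q=q(\varphi)(\varphi')^2+A\bigl(\tilde p\,g+g'-Ag^2\bigr)$, reducing everything to expanding $g'$ and $g^2$ into $Q''/Q$, $V''/V$, $Q'^2/Q^2$, $V'^2/V^2$, $Q'V'/(QV)$ and matching coefficients against the bracket of the lemma (whose last summand is $g$ times $F'/F-U'/(\ell U)-V'/V-W'/(kW)$, differing from $\tilde p\,g$ only by $(1-1/m)(V'/V)g$).

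The main obstacle is this final coefficient matching, where the arithmetic is genuinely delicate. The $Q''/Q$ and $V''/V$ coefficients match at sight; the $Q'^2/Q^2$ and $V'^2/V^2$ coefficients match because $B-A=1/m$ forces $1+Am=Bm$ and $A+1/m=B$; and the cross term $Q'V'/(QV)$ is the sharpest check, its coefficient $m-1-2Am$ collapsing to $(1/k+1/\ell)m$ exactly via $2Am=m-m/k-m/\ell-1$. Once these five identities are verified the two expressions for $\tilde q$ coincide and the lemma follows. Throughout, I would treat (\ref{logd1}) and (\ref{logd2}) as the organizing devices, since they are what turn the a priori unwieldy quantities $\varphi(\varphi-1)$ and $\varphi''/\varphi'$ into rational combinations of the logarithmic derivatives of $P,Q,R,U,V,W,F$, after which the whole verification is a finite, if intricate, bookkeeping exercise.
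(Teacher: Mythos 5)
Your proposal is correct and follows essentially the same route as the paper, whose entire proof of Lemma \ref{lm:gpback} is the ``lengthy symbolic computation, using (\ref{eq:varphis}) and (\ref{logd2})'' that you spell out: the gauge/pull-back formula, the cancellation pattern in $\tilde p$, the reduction $\tilde q = q(\varphi)(\varphi')^2 + A\bigl(\tilde p\,g + g' - Ag^2\bigr)$ with $g=mQ'/Q+V'/V$, and the coefficient identities driven by $B-A=1/m$ and $2A=1-1/k-1/\ell-1/m$ are exactly what is needed, and they do check out. Two inessential slips are worth flagging: in your displayed general formula for $\tilde q$ the middle term should be $+\bigl(\varphi''/\varphi'-p(\varphi)\varphi'\bigr)\theta'/\theta$ rather than its negative (your subsequent reduced formula is the one that follows from the correct sign, so nothing propagates), and your evaluation $q(\varphi)(\varphi')^2=AB\,h_1h_2\,P^{\ell-2}R^{k-2}UW/(Q^2F^2)$ tacitly corrects a typo in the paper's (\ref{logd1}), whose second numerator should read $P^{\ell-1}U$ rather than $P^{\ell-1}W$ (as the zeros of $\varphi'/(\varphi-1)$ lie above $z=0$), which is precisely the form consistent with the $UW$ appearing in the lemma.
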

\begin{proof} A lengthy symbolic computation, using (\ref{eq:varphis}) and (\ref{logd2}).
\end{proof}

The transformed equation is to be identified with the target Heun equation,
or (if the roots of $U,V,W$ are not normalized to $x=0,1,t,\infty$) with a Fuchsian equation
with 4 singularities at the roots of $UVW$. The accessory parameter $q$ is a new undetermined variable.
The terms to $dY(x)/dx$ are always identical, but comparison of the terms to $Y(x)$
gives new algebraic equations between the undetermined variables.
If $k=2$, $\ell=3$, not only $R$ but also $P$ can be eliminated symbolically.
The two expressions in (\ref{logd2}) and Lemma \ref{lm:gpback} then give 
a non-linear differential equation for $Q$, with $q$ and the coefficients of $U,V,W$ 
as parametric variables. After substitution of general polynomial expression 
for $Q$, we collect to the powers of $x$ and get a system of algebraic equation
for undetermined coefficients. This is explained more thoroughly in \cite[\S 2.2]{BelyiKit}.
The logarithmic derivative ansatz and Lemma \ref{lm:gpback} do not use the location $\varphi=1$
of the third fiber, hence the polynomials $U,V,W$ can be assumed to be monic as well.
Then the Belyi function $\varphi(x)$ has to be adjusted by a constant multiple at the latest stage.
In most cases, all but 3 variables\footnote{Or all except 4 weighted homogeneous variables, 
if the scaling transformations $x\to\alpha x$ are left to act. The Schwarz maps of \S \ref{ProgramH}
are determined by 3 values as well: the location parameter, the accessory parameter,
and the constant multiple.} 
are eliminated linearly, leaving only so many variables for hard Gr\"obner basis computations.
Our implementation \cite[{\sf ComputeBelyi.mpl}]{HeunURL} for {\sf Maple 15}
computes the degree 60 Belyi maps in 110s, 
the Galois orbit J28 in 274s, and the orbit pair H11, J26 in 830s.

\begin{example} \label{ex:deg54}
Consider computation of degree $54$ Belyi functions with the branching fractions $1/7,1/7,1/7,2/7$.
We assign 
the branching fraction $2/7$ to $x=\infty$, so that $U=W=1$.
The polynomials $P,Q,R,V$ are assumed to be monic, without multiple roots, 
of degree $18$, $7$, $27$, $3$ respectively.
If we would assume $V=x(x-1)(x-t)$, the Heun equation would have \mbox{$a=9/14$},
$b=13/14$ and $c=d=6/7$. To avoid increase of the moduli field, we rather assume $V=x^3+v_2x+v_3$.
Here the $x^2$ term is zero-ed by a translation $x\to x+\beta$, 
so that only scaling M\"obius transformations $x\to \alpha\,x$ are left to act.
The transformed Fuchsian equation must have the following term to $Y(x)$: $ab(x-q)/V$.
The logarithmic derivative ansatz gives
\[
2R=3P'QV-7PQ'V-PQV',\qquad 2P^2=2QR'V-7Q'RV-QRV',
\]
while Lemma \ref{lm:gpback} gives
\[
\frac{13}{84}\left(\frac{4P}{Q^2V^2}-\frac{49Q'{}^2}{Q^2}-\frac{V'{}^2}{V^2}\right)
+\frac{7Q''}{Q}+\frac{V''}{V}+\frac{35Q'V'}{6QV}=\frac{351\,(x-q)}{7V}.
\]
Symbolic elimination of $R,P$ on {\sf Maple} gives the following differential expression:
\begin{align*}
&\frac{7Q''''}{15Q}+\frac{7Q'''}{3Q}\left(\frac{V'}{V}-\frac{Q'}{Q}\right)+\frac{(7Q'')^2}{26Q^2}
+\frac{Q''V'}{QV}\left(\frac{13V'}{7V}-\frac{35Q'}{13Q}\right) \\
& \! +\frac{3Q''}{7QV}\left(115q-\frac{1033}{13}x\right)
+\frac{Q'{}^2}{7Q^2V}\left(\frac32(163x-247q)+\frac{16V'{}^2}{13V}\right)-\frac{13V'}{2V^2}\\
& \! +\frac{3Q'}{2QV}\left(\left(\frac{183}7q-\frac{241}{13}x\right)\frac{V'}{V}+\frac{67}{21}\right)
+\frac{18}{V^2}\left(2x-\frac{13}5q\right)\left(\frac{46}{13}x-\frac{27}7q\right)=0.
\end{align*}
Here the values $V''=6x$, $V'''=6$, $V''''=0$ are simplified.  
Substituting the explicit $V$, $Q=x^7+c_1x^6+\ldots+c_6x+c_7$ 
and clearing the denominator, we obtain a polynomial expression of degree $15$ in $x$. 
The leading term gives \mbox{$q=-5c_1/52$}. 
The next term gives nothing new (as follows from \cite[Lemma\ 2.1]{BelyiKit}). 
But the next $5$ equations allow subsequent elimination of $c_3,c_4,c_5,c_6,c_7$ in terms of
$c_1,c_2,v_2,v_3$. The $4$ remaining variables are weighted-homogeneous, with the weights $1, 2, 2, 3$. 
Elimination of $v_2,v_3$ using the other $10$ equations is done with the Gr\"obner basis routine of
{\sf Maple 15} in about 35s (on a PC with 2.66GHz Intel Core Duo). 
The algebraic system has $4$ Galois orbits of solutions, $3$ of them 
parasitic\footnote{The parasitic solutions are: the degree 18 coverings mentioned in footnote 3;
a degree 10 covering with the branching pattern $5\,[2]=3\,[2]+4\,[1]=7+2+1$; 
and the non-cyclic cubic Belyi covering. In all cases, the simplification of the numerator 
and the denominator of $\varphi(x)$ is by a linear polynomial to the maximal power (36, 44 or 51).}. 
The proper solution has the label D28. We can take
\begin{align*}
V=x^3-4899x-370078,\quad 
Q=x^7+28x^6+\frac{29063265}{512}x^5+\ldots. 
\end{align*}
The expression for $\varphi(x)$ is long. We looked for an optimizing M\"obius transformation.
The bit size of $\varphi(x)$ is reduced by the factor $\approx 2.26$
after the M\"obius substitution \mbox{$x\mapsto (241x-212)/(x+4)$}.
Then
\begin{equation*}
\varphi(x)=\frac{P^3}{864(x-4)(3x^2+1)(x+4)^2Q^7},
\end{equation*}
where $Q=3x^7-7x^6-14x^5-98x^4+147x^3-7x^2+56x+16$ and
\begin{align*}
P=&47x^{18}-2028x^{17}+5502x^{16}+54540x^{15}-263535x^{14}-32592x^{13}+2249268x^{12}\\
&-3436872x^{11}+14145x^{10}-1425900x^9-8774370x^8-1715652x^7-10594017x^6\\
&+2223144x^5-5284080x^4+1638144x^3-1306368x^2+239616x-135168.
\end{align*}
\end{example}

\section{Moduli fields and obstruction conics}
\label{sec:conic}

Particularly interesting are Belyi functions with moduli field issues. 
Here we present these instances among the minus-4-hyperbolic functions.
At the same time, we briefly recall cohomological and conic obstructions
on realization fields of Belyi functions, give a straightforward characterization
of the obstruction conic (in Lemma \ref{lm:conic}) that applies to our cases,
and express a few Belyi coverings as functions on the obstruction conics.
Further computational and geometrical details are considered in \cite[\S 4]{BelyiKit}.

Let $\OO$ denote the group of M\"obius transformations:
\[ \OO = \left\{ \frac{ax+b}{cx+d} \, | \, a,b,c,d \in \overline{\QQ} {\rm \ with \ }ad-bc \neq 0 \right\} 
\cong {\rm Aut}(\overline{\QQ}(x)/\overline{\QQ}). \]
Two rational functions $\varphi_1, \varphi_2 \in \overline{\QQ}(x)$ are called 
{\em M\"obius-equivalent}, denoted $\varphi_1 \sim \varphi_2$, 
if there exists $\mu \in \OO$ with $\varphi_1 \circ \mu = \varphi_2$.
A {\em realization field} of a Belyi covering $\varphi$ is a number field over which
a M\"obius equivalent function $\varphi\circ\mu$ is defined. 
The $r$-field from Definition~\ref{df:jtfield2} is such a field,
but often not of minimal degree. 
\begin{definition} \label{def:moduli}
Let $\varphi \in \overline{\QQ}(x)$ be a Belyi function. 
The {\em moduli field} $M_{\varphi}$ is the fixed field of 
$\{ \sigma\in{\rm Gal}(\overline{\QQ}/\QQ) \,|\, \varphi \sim \sigma(\varphi) \}$.
\end{definition}
Clearly $M_{\varphi} \subseteq K_{\varphi}$ for any 
realization field $K_\varphi$.  
The moduli field is known to be equal the intersection of the realization fields of $\varphi$.
Two Belyi functions are M\"obius-equivalent if and only 
if they have the same dessin d'enfant up to homotopy.
Thus, the moduli field of a dessin d'enfant is well defined.
The number of different dessins (up to homotopy) 
in a Galois orbit is equal to the degree of the moduli field.

For each Belyi function $\varphi$ in our list, 
we determined its moduli field and realization fields.
Among the minus-4-hyperbolic Belyi functions,
there are 14 Galois orbits for which the moduli field not a realization field. 
They are given in Table \ref{conic}. The realization fields are then determined
by an {\em obstruction conic}, 
as explained in \S \ref{sec:obstructf}.
The last two columns characterize the conics.

\begin{vchtable} 
\begin{tabular}{@{}llclcll@{}} 
\hline Id & branching\!\! & $d$ & $[k \ell m]$ & Moduli &
Obstruction & Bad  \\
& fractions &    &    & field & conic & primes \\  \hline
B12 & $\fr17,\fr17,\fr37,\fr37$ & 36 & [237] & $\QQ$ & $u^2+v^2+7$
& $7,\infty$ \\ 
C6 & $\fr13,\fr13,\fr27,\fr27$ & 32 & [237] & $\QQ$ & $u^2+v^2+1$ &
$2,\infty$ \\ 
C30 & $\fr12,\fr12,\fr14,\fr14$ & 10 & [245] & $\QQ$ & $u^2+2v^2+5$
& $5,\infty$ \\
D45 & $\fr14,\fr14,\fr14,\fr14$ & 20 & [245] & $\QQ$ & $u^2+2v^2+5$
& $5,\infty$ \\
F1 & $\fr13,\fr13,\fr13,\fr13$ & 8 & [334] & $\QQ(\sqrt{2})$ &
$u^2 + 3v^2 + \sqrt{2} - 1$ & $3,\infty$ \\
F4 & $\fr12,\fr12,\fr18,\fr18$ & 18 & [238] & $\QQ(\sqrt{2})$ &
$u^2+v^2+1$ & $\infty,\infty$ \\
F6 &  $\fr18,\fr18,\fr18,\fr18$ & 36 & [238]  & $\QQ(\sqrt{2})$ &
$u^2+v^2+1$ & $\infty,\infty$ \\
F11 &  $\fr15,\fr15,\fr15,\fr15$ & 12 & [255] & $\QQ(\sqrt{5})$ &
$u^2+2v^2+\sqrt5$ & $5,\infty$ \\
H1 &  $\fr13,\fr13,\fr19,\fr19$ & 20 & [239]  & $\QQ(\mbox{Re}\,\zeta_9)$
 & $u^2+v^2+ \mbox{Re}\,\zeta_9$ & $\infty,\infty$ \\
H10 & $\fr12,\fr12,\fr17,\fr17$ & 30 & [237]  & $\QQ(\mbox{Re}\,\zeta_7)$
 & $u^2+v^2-\mbox{Re}\,\zeta_7$ & $\infty,\infty$ \\
H11 & $\fr13,\fr13,\fr17,\fr17$ & 44 & [237]  & $\QQ(\mbox{Re}\,\zeta_7)$
 & $u^2+v^2-\mbox{Re}\,\zeta_7$ & $\infty,\infty$ \\
H12 & $\fr12,\fr12,\fr13,\fr13$ & 14 & [237]  & $\QQ(\mbox{Re}\,\zeta_7)$
 & $u^2+v^2-\mbox{Re}\,\zeta_7$ & $\infty,\infty$ \\
H13 & $\fr13,\fr13,\fr13,\fr13$ & 28 & [237]  & $\QQ(\mbox{Re}\,\zeta_7)$
 & $u^2+v^2-\mbox{Re}\,\zeta_7$ & $\infty,\infty$ \\
H14 & $\fr17,\fr17,\fr17,\fr17$ & 60 & [237]  & $\QQ(\mbox{Re}\,\zeta_7)$
 & $u^2+v^2-\mbox{Re}\,\zeta_7$ & $\infty,\infty$ \\
\hline
\end{tabular} 
\vchcaption{Belyi functions with an obstruction.}
\label{conic}
\end{vchtable}

The moduli fields are computed directly from Definition \ref{def:moduli}
by checking which Galois conjugates of $\varphi$ are M\"obius-equivalent to $\varphi$. 
The computed Belyi functions $\varphi$ 
always had $[K_{\varphi}:\QQ(j)] \leq 2$, where $\QQ(j)$ is the $j$-field. But 
$\QQ(j) \subseteq M_{\varphi} \subseteq K_{\varphi}.$ 
Therefore $M_{\varphi} = K_{\varphi}$ if $K_{\varphi} = \QQ(j)$.
For those $K_{\varphi} \neq \QQ(j)$, let $\sigma$ be the non-trivial element 
of ${\rm Gal}(K_{\varphi} / \QQ(j))$.
Our moduli fields are then determined by just checking whether $\varphi \sim \sigma(\varphi)$.

The dessins d'enfant of most of the Belyi maps of Table \ref{conic}
are depicted in Figure \ref{fg:mdessins}. The other Galois orbits with
obstructed dessins are found in Figures \ref{fg:deg60s} and \ref{fg:vdessins}.
The interesting questions whether a dessin has a moduli field 
$\subset \RR$, and if so, does it have a realization over $\RR$, are considered in 
\cite{CouveignesGranboulan}. 
Although all moduli fields in the obstructed cases are real,
not all their dessins have a reflection symmetry (i.e., have a realization over $\RR$). 
Rather, their complex conjugates are equivalent to the original up to homotopy that
permutes the cells, reflecting a non-trivial M\"obius equivalence.
The number of these skew-symmetric dessins depends on the number of bad $\infty$-primes
shown in the last column of Table \ref{conic}.
The moduli field for  H1, H10--H14 has three infinite primes, but only two of them are bad.
Therefore one dessin in those orbits has a reflection symmetry,
and the other two are skew symmetric. Likewise, F1 and F11 each have one dessin 
with $\RR$-realization and one dessin without.

\input{Mdessins.tex}

\subsection{Obstructions on realization fields}
\label{sec:obstructf}

If the moduli field $M_\varphi$ is not a realization field, 
the realization fields are determined by a {\em conic obstruction}.
For each of the cases of Table \ref{conic}, the realization fields are those extensions of $M_\varphi$
that have a rational point on the conic curves given in the sixth column.

For $\varphi \in \overline{\QQ}(x)$, 
let us denote $\Gamma_\varphi=\mbox{Gal}(\overline{\QQ}/M_\varphi)$.
Let
\[ \OO_{\varphi} = \{ \mu \in \OO | \varphi \circ \mu 
= \varphi\} \cong {\rm Aut}(\overline{\QQ}(x)/\overline{\QQ}(\varphi)), 
\]
be the group of M\"obius automorphisms of $\varphi$. 
For any $\sigma\in\Gamma_\varphi$
we have $|\OO_\varphi|$ choices for $\mu\in\OO$ in $\sigma(\varphi)=\varphi\circ\mu$.
If for each $\sigma\in\Gamma_\varphi$  we can choose such $\mu_\sigma\in\OO$  
so that $\mu_\sigma\circ\sigma(\mu_\rho)=\mu_{\sigma\rho}$ for any $\sigma,\rho\in\Gamma_\varphi$, 
then we have a cocycle of Galois cohomology \cite{SerreLF} 
representing an element of $H^1(\Gamma_\varphi, \OO)$.
This choice is certainly possible if $|\OO_\varphi|=1$. The realization fields $L$ are then those
which are mapped to the identity in $H^1({\rm Gal}( \overline{\QQ} / L), \OO)$.
As recalled in \cite{Filimonenkov}, the elements of $H^1( \Gamma_\varphi, \OO)$
are in one-to-one correspondence with isomorphism classes of conic curves over $M_\varphi$.
This is a special case of the construction in \cite[Ch. XIV]{SerreLF}. 

In turn, a conic  is determined 
up to birational equivalence over $M_\varphi$ by the primes $\mathfrak{p}$ of bad reduction. 
The number of bad primes is always even. 
The bad primes are precisely those for which $\varphi$ 
has no realization over the completion of $M_{\varphi}$ at $\mathfrak{p}$.
The completion at a real prime 
is isomorphic to $\RR$. 
Notice that the conics for C6 and F4 look the same $u^2+v^2+1=0$ but over different moduli fields.
In particular, their sets of bad primes differ.

In \cite[\S 7]{Filimonenkov} it is proved that if $\varphi(x)\in\overline{\QQ}(x)$ has a Galois cocycle,
then there is a realization over a quadratic extension of the moduli field $M_\varphi$.
These realizations are straightforward to obtain for Belyi functions
with exactly two points of some branching order in the same fiber $f\in\{0,1,\infty\}$.
Designating those two points as $x=\infty$, $x=0$ extends the moduli field at most quadratically.
This applies to all our examples except D45, F6, H13, H14.

Suppose now $\varphi(x)\in M_\varphi(\sqrt{A})$ for $A\in M_\varphi$, and let $\mu(x)\in\OO$ 
be the cocycle representative of those Galois elements that conjugate $\sqrt{A}\to-\sqrt{A}$. 
With $x=\infty$, $x=0$ set as just above, the possible M\"obius transformations
are $x\mapsto -x$ or $x\mapsto B/x$. In the former case, the quadratic extension disappears after 
the scaling $x\mapsto \sqrt{A}\,x$. 
\begin{lemma} \label{lm:conic}
Suppose that we have a Belyi function $\varphi(x)\in M_\varphi(\sqrt{A})$ where $M_\varphi$ 
is the moduli field. Suppose that there is a Galois cocycle that sends the Galois elements that
conjugate \mbox{$\sqrt{A}\to-\sqrt{A}$} to $x\mapsto B/x$ for $B\in M_\varphi$. 
Then the obstruction conic is isomorphic to \mbox{$u^2=Av^2+B$.}
\end{lemma}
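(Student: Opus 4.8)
The plan is to realize the obstruction conic concretely as the Severi--Brauer twist of $\PP^1_x$ by the given cocycle and then to recognize the resulting curve. Write $\tau$ for (the image in $\mathrm{Gal}(M_\varphi(\sqrt A)/M_\varphi)$ of) any $\sigma\in\Gamma_\varphi$ with $\sigma(\sqrt A)=-\sqrt A$. By hypothesis the cocycle representing the class in $H^1(\Gamma_\varphi,\OO)$ factors through this quadratic quotient, with $\mu_\sigma=\mathrm{id}$ when $\sigma$ fixes $\sqrt A$ and $\mu_\sigma=(x\mapsto B/x)$ otherwise. First I would check that this is genuinely a cocycle: since $B\in M_\varphi$ the map $x\mapsto B/x$ is defined over $M_\varphi$ and is an involution, so $\mu_\tau\circ\tau(\mu_\tau)=\mu_\tau\circ\mu_\tau=\mathrm{id}=\mu_{\tau^2}$, and the remaining relations are immediate. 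This confirms that the twist, hence the associated conic of the earlier correspondence, is well defined.

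Next I would compute the twisted form of $\PP^1_x$ directly on function fields. The twisted Galois action of $\tau$ on $\overline\QQ(x)$ sends $f(x)$ to $\overline f(B/x)$ (conjugate the coefficients, then substitute $x\mapsto B/x$); because $\mu_\tau$ is involutive and $B\in M_\varphi$ this is again an involution, and the function field of the conic is its fixed field $K$. Two invariants are visible at once,
\[ s=x+\frac Bx, \qquad w=\sqrt A\Bigl(x-\frac Bx\Bigr), \]
the first because $s\in M_\varphi(x)$ is already fixed by $x\mapsto B/x$, the second because conjugation flips the sign of $\sqrt A$ while $x\mapsto B/x$ flips the sign of $x-B/x$, the two sign changes cancelling. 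Since $2x=s+w/\sqrt A$ and $\sqrt A\in\overline\QQ$, the pair $s,w$ generates $\overline\QQ(x)$ over $\overline\QQ$, so $M_\varphi(s,w)$ and $K$ have the same base change to $\overline\QQ$ and therefore coincide. A short elimination of $x$ gives the single defining relation
\[ w^2=A\,s^2-4AB. \]

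Finally I would identify this conic with $u^2=Av^2+B$ over $M_\varphi$. The cleanest route is to note that the affine conic $u^2-Av^2=c$ depends, up to $M_\varphi$-isomorphism, only on the class of $c$ modulo the norm group $N_{M_\varphi(\sqrt A)/M_\varphi}\bigl(M_\varphi(\sqrt A)^\times\bigr)$; equivalently its Brauer class is the quaternion symbol $(A,c)$, and $(A,-4AB)=(A,4)(A,-A)(A,B)=(A,B)$ since $4$ is a square and $(A,-A)=0$. Concretely $-4A=0^2-A\cdot 2^2$ is a norm, so $-4AB$ and $B$ differ by a norm, whence $w^2=As^2-4AB$ and $u^2=Av^2+B$ are isomorphic over $M_\varphi$, as claimed. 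The main obstacle I anticipate is bookkeeping rather than depth: fixing the correct convention for the twisted action (pre- versus post-composition, $\mu$ versus $\mu^{-1}$) and verifying that $s,w$ exhaust $K$ rather than a proper subfield; the passage between the two conic shapes is routine once one observes that $-4A$ lies in the image of the norm.
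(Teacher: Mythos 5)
Your proof is correct and follows essentially the same route as the paper's: both arguments realize the conic via the Galois-invariant combinations of $x+B/x$ and $\sqrt{A}^{\pm1}\bigl(x-B/x\bigr)$ and then read off the defining relation. The only difference is normalization --- the paper takes $u=\tfrac12\bigl(x+\tfrac{B}{x}\bigr)$ and $v=\tfrac1{2\sqrt{A}}\bigl(x-\tfrac{B}{x}\bigr)$ so that $u^2=Av^2+B$ drops out immediately, whereas your choice $w=\sqrt{A}\bigl(x-\tfrac{B}{x}\bigr)$ yields $w^2=As^2-4AB$ and needs the (correct) closing observation that $-4A$ is a norm from $M_\varphi(\sqrt{A})$.
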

\begin{proof}
The functions
\begin{equation} \label{eq:conicp}
u=\frac12\left(x+\frac{B}x\right), \qquad 
v=\frac1{2\sqrt{A}}\left(x-\frac{B}x\right).
\end{equation}
are invariant under the Galois action, hence they are $M_\varphi$-rational
functions on the obstruction conic. They are related by $u^2=Av^2+B$.
\end{proof}

\begin{example}
The branching pattern for  \AJ{C30} is $2\,[5]=2\,[4]+1+1=4\,[2]+1+1$. 
The moduli field is $\QQ$. Here is a realization over $\QQ(\sqrt{-3})$,
with the points of branching order $5$ assigned as $x=\infty$, $x=0$:
\begin{equation} \label{eq:c30}
\varphi(x) = \frac{2(x^2+5x-5)^4\big((x^2+5)\sqrt{-3} - 3x^2 - 60x + 15\big)}{(12 x)^5}. 
\end{equation}
We have $|\OO_\varphi|=1$, since the numerator of $\varphi(y)-\varphi(x)$ has only one
linear factor $y-x$. Let $\sigma: \sqrt{-3} \mapsto -\sqrt{-3}$ denote the non-trivial element 
of ${\rm Gal}(\QQ(\sqrt{-3})/\QQ)$. The numerator of $\varphi(y)-\sigma(\varphi(x))$ has
a linear factor $xy+5$, giving the M\"obius transformation $\mu(x)=-5/x$ for 
$\sigma(\varphi) = \varphi \circ \mu$. By Lemma $\ref{lm:conic}$, the obstruction conic
is isomorphic to $C:u^2+3v^2+5=0$. 
We can express $\varphi$ as a function on this conic 
by writing $\varphi(x)$ as a product of Laurent polynomials and substituting
\[
x=u+v\sqrt{-3}, \qquad \frac1x=\frac{-u+v\sqrt{-3}}{5}.
\]
The expression is
\begin{equation} \label{eq:conic30}
\varphi=\left(\frac{u}6+\frac{5}{12}\right)^{\!4} (v-u-10) \in \QQ(u,v)/( u^2 + 3 v^2 + 5 ).
\end{equation}
A point $(u_0,v_0)\in C$ defined over some number field $L\supset\QQ$ 
gives a parametrization $\lambda:\PP^1\to C$ by the lines passing  through $(u_0,v_0)$. 
The composition $\varphi\circ\lambda$ gives then a realization of $\varphi$ over $L$.
Formula $(\ref{eq:conicp})$ gives one such parametrization.
The conic $C$ is isomorphic to the conic given by $u^2+2v^2+5=0$ as they have the same set of bad primes. A projective isomorphism is $(u:v:1)\mapsto \big(\frac12(u-5):v:\frac12(u+1)\big)$. 
Its computation is explained in \cite[\S 3.5]{BelyiKit}.
\end{example}

The obstructed cases without a cocycle are the following: D45, F6, H13, H14.
These are exactly the cases of Table \ref{conic} with $|\OO_{\varphi}| > 1$.
In fact, $|\OO_{\varphi}|  = 2$ for these Galois orbits\footnote{
Non-existence of a cocycle defined over $\RR$ can be shown geometrically by using the criterium 
in \cite[Theorem 2]{CouveignesGranboulan}. Each of the dessins for D45, F6, H13, H14 without 
a reflection symmetry has a tetrahedral carcass (obtained by taking out some faces 
around 4 exceptional points or cells). A pair of opposite tetrahedron edges $e,f$ relate 
to the exceptional faces differently than the other tetrahedron edges. 
If we assume the tetrahedron to have equal straight edges, the dessin symmetry is rotation by $\pi$ 
around the axis connecting the midpoints of $e,f$. The complex conjugation is realized
by permutations $w,w^{-1}$ of half-edges (connecting black vertices and white midpoints)
that swap $e,f$ and cyclically permute the other $4$ tetrahedron edges.
The order of $w$ is thus $4$, but we must have $w^2=$id for a cocycle.}.
To get explicit realizations for these Belyi functions, 
we suggest to take their quotients by $\OO_\varphi$.
The quotients are C30, F4, H12, H10, respectively.
The smaller coverings do have a cocycle and parametrizations by obstruction conics.
As shown in \cite[\S 3.2]{BelyiKit}, the realization fields of C30, D45 
(and of F4, F6; or H12, H13; or H10, H14) are the same. 
In particular, each realization $\varphi\circ\lambda$ of C30
is composed with a quadratic covering (to D45) defined over the same field. 
The quadratic covering composes with the conic paramaterization $\lambda$,
not with the conic\footnote{In fact \cite{Macrae}, a conic defined 
over a field $K$ without a $K$-rational point cannot have quadratic coverings defined over $K$.}
realization $\varphi$. 

In the obstructed cases with a cocycle, realization of a Belyi covering as a function 
on the obstruction conic is a specially compact expression of the Belyi covering,
as demonstrated in (\ref{eq:conic30}). Here are two more examples. For C6, we have
\begin{equation}  
\varphi=\frac{(u-5)(u-1)^3(v^4+18v^2+8v(v-58)(u-4)-3403)^3}{3456(v(u-4)-13)^7} 
\end{equation}
on the conic $u^2+v^2+2=0$. The conic is isomorphic to $u^2+v^2+1=0$ by $(u,v)\mapsto(u+v,u-v)$.
For F11, we use the expression in \cite{Filimonenkov} and obtain
\begin{equation} 
\varphi= -\frac{(u+\sqrt5+2)^5\big(u-2(3-\sqrt5)v-5\sqrt5\big)}
{(u-\sqrt5-2)^5\big(u-2(3-\sqrt5)v+5\sqrt5\big)}.
\end{equation}
on the conic $u^2+2v^2+\sqrt5=0$. After the substitution 
\[
(u,v)\mapsto \left( (\sqrt5+2)\,\frac{u+1}{u-1}, \frac{(3\sqrt5+1)u+5\sqrt5v-2\sqrt5+1}{(3-\sqrt5)(u-1)}\right)
\]
the expression for F11 becomes $u^5(1-u-v)/v$, though the conic equation then becomes complicated.

\begin{vchfigure}\begin{picture}(314,94)(0,-1) \thicklines{\thinlines
\put(0,73){F1} \put(23,91){\circle*3} \put(53,59){\circle*3} \put(53,59){\line(-1,0){29}} \put(23,91){\line(1,0){29}} \put(53,91){\circle3} \put(23,59){\circle3} \put(23,60){\line(0,1){31}} \put(53,90){\line(0,-1){31}} \put(53,59){\line(1,0){18}} \put(72,59){\circle3} \put(54,91){\line(1,0){18}} \put(72,91){\circle*3} \put(23,91){\line(6,-5){12}} \put(36,80){\circle3} \put(24,60){\line(6,5){12}} \put(36,70){\circle*3} \put(142,74){\line(1,-1){12}} \put(142,76){\line(1,1){12}} \put(154,88){\circle*3} \put(154,62){\circle*3} \put(141,75){\circle3} \put(140,75){\line(-1,0){15}} \put(125,75){\circle*3} \qbezier(124,76)(112,109)(88,76) \qbezier(124,74)(112,41)(88,74) \put(87,75){\circle3} \put(99,75){\circle*3} \put(99,75){\line(-1,0){11}} \put(99,75){\line(5,-3){10}} \put(99,75){\line(5,3){10}} \put(110,81){\circle3} \put(110,69){\circle3} 
\put(178,73){F7} \put(205,91){\circle*3} \put(235,59){\circle*3} \put(235,59){\line(-1,0){29}} \put(205,91){\line(1,0){29}} \put(235,91){\circle3} \put(205,59){\circle3} \put(235,90){\line(0,-1){31}} \put(235,59){\line(1,0){18}} \put(254,59){\circle3} \put(236,91){\line(1,0){18}} \put(254,91){\circle*3} \qbezier(206,60)(217,75)(206,90) \qbezier(204,60)(193,75)(204,90) \put(349,75){\circle3} \put(348,75){\line(-1,0){14}} \put(334,75){\circle*3} \qbezier(333,76)(323,91)(313,76) \qbezier(333,74)(323,59)(313,74) \put(312,75){\circle3} \put(310,75){\line(-1,0){14}} \put(296,75){\circle*3} \qbezier(295,76)(281,98)(265,76) \qbezier(295,74)(281,52)(265,74) \put(280,75){\circle*3} \put(280,75){\line(-1,0){14}} \put(265,75){\circle3} \put(209,36){\circle*3} \put(209,36){\line(1,0){39}} \put(249,36){\circle3} \put(249,36){\line(0,-1){31}} \put(209,36){\line(0,-1){31}} \put(209,4){\circle3} \qbezier(210,35)(232,20)(210,5) \qbezier(248,35)(226,20)(248,5) \qbezier(208,35)(186,20)(208,5) \qbezier(250,35)(272,20)(250,5) \put(248,35){\line(-2,-5){4}} \put(210,35){\line(2,-5){4}} \put(214,24){\circle3} \put(249,4){\circle*3} \put(249,4){\line(-1,0){39}} \put(208,5){\line(-2,5){4}} \put(250,5){\line(2,5){4}} \put(254,16){\circle3} \put(204,15){\circle*3} \put(244,25){\circle*3} \put(282,20){\circle3} \qbezier(283,21)(292,41)(305,21) \qbezier(283,19)(292,-1)(305,19) \put(306,20){\circle*3} \put(306,20){\line(-5,-2){10}} \put(306,20){\line(-5,2){10}} \put(295,24){\circle3} \put(295,16){\circle3} \put(283,20){\line(1,0){21}} \qbezier(282,21)(293,60)(318,21) \qbezier(282,19)(293,-20)(318,19) \put(319,20){\circle*3} \put(319,20){\line(1,0){26}} \put(346,20){\circle3} \qbezier(320,21)(334,48)(346,21) \qbezier(320,19)(334,-8)(346,19) \put(336,26){\circle*3} \put(336,14){\circle*3} \put(336,14){\line(2,1){10}} \put(336,26){\line(2,-1){10}} }
\put(0,18){F11} \put(40,20){\line(-4,-3){12}} \put(28,11){\circle*3} \qbezier(40,20)(21,13)(25,25) \qbezier(40,20)(29,37)(25,25) \qbezier(40,20)(62,54)(76,20) \qbezier(40,20)(62,-14)(76,20) \qbezier(76,20)(57,27)(61,15) \qbezier(76,20)(65,3)(61,15) \put(76,20){\line(-4,3){12}} \put(64,29){\circle*3} \qbezier(112,20)(90,22)(100,32) \qbezier(112,20)(110,42)(100,32) \qbezier(112,20)(90,18)(100,8) \qbezier(112,20)(110,-2)(100,8) \put(112,20){\line(1,0){12}} \qbezier(124,20)(150,51)(150,20) \qbezier(124,20)(150,-11)(150,20) \put(124,20){\line(5,-2){15}} \put(124,20){\line(5,2){15}} \put(139,26){\circle*3} \put(139,14){\circle*3} 
\put(172,18){or} 
\end{picture}\vchcaption{Ambiguous cases of moduli fields}\label{fg:vdessins}\end{vchfigure}

\subsection{Ambiguous moduli fields}
\label{sec:f7}

The moduli field for the Galois orbit F7 is $M=\QQ(\sqrt{3+6\sqrt2})$ by the standard definitions.
However, the branching pattern is $[4]+2+1=2\,[3]+1=2\,[3]+1$ has two symmetric fibers $2\,[3]+1$.
The conjugation of $M\supset\QQ(\sqrt2)$ permutes the two fibers, so the derivative of a Belyi function 
for F7 has a compact expression:
\begin{eqnarray}  
\varphi'(x)= 
\frac{\big(2x^2+x+3-2\sqrt2(x+1)\big)^2\big(8x^2-12x+4+\sqrt2(2x-3)\big)^2}
{\sqrt{3+6\sqrt2}\,(753-531\sqrt2)\,x^3 (x-1)^2}.
\end{eqnarray}
The function $\sqrt{3+6\sqrt2}\,\big(2\varphi(x)-1\big)$ is defined over $\QQ(\sqrt2)$ and branches only
above $z=\infty$ and $z=\pm\sqrt{3+6\sqrt2}$. 
Defining a Belyi function by requiring branching in any (at most) 3 fibers,  not specifically $\{0,1,\infty\}$,
would make no geometrical difference because of M\"obius transformations on $\PP_z^1$.
But evidently, there are arithmetic consequences for moduli and realization fields.
The number of dessins for F7 is 2 or 4 depending of whether the dessins are counted up to
M\"obius equivalence on $\PP^1_z$ or not. 
Figure \ref{fg:vdessins} depicts two of the dessins for F7. The other two are obtained by swapping 
the color labeling of black and white vertices. 
If the symmetric fibers are put at $z=0$, $z=1$, the transformation $z\mapsto 1-z$ 
swaps the two symmetric fibers and changes the sign of $\sqrt{3+6\sqrt2}$. 
One conjugation of $\sqrt2$ gives $\sqrt{3+6\sqrt2}\in\RR$,
hence one of the dessins is real. 

Most remarkably, the Galois orbits F1 and F11 demonstrate a 
mix of  a conic obstruction and ambiguous moduli field. Their realization fields are obstructed 
by  the conics in Table \ref{conic} if we insist in having the branching fibers at $\{0,1,\infty\}$.
But M\"obius transformations on $\PP_z^1$ of their Belyi functions can be expressed
over the moduli fields. 
Reflecting this, the first dessin of F1 in Figure \ref{fg:vdessins} is symmetric if vertex coloring is ignored,
but the black and white vertices are interchanged by the complex conjugation.
A Belyi function for F1 is
\begin{equation}
\int 
\frac{8(5+3\sqrt2)\big(x^4+4x^2+6+\sqrt2(14x^2+4)\big)^2}
{3\sqrt{-6\sqrt2}\left(x^2-2\sqrt2x-2-\sqrt2\right)^5} \, dx,
\end{equation}
with a proper integration constant setting the branching fibers $z=0$, \mbox{$z=1$}.
But an expression in $\QQ(\sqrt{2})(x)$ is obtained after multiplication by $\sqrt{-6\sqrt2}$
and loosening the integration constant.
The dessins for F11 are drown in Figure \ref{fg:vdessins} in two variations: 
first compactly, by hiding white vertices of order 2;
then assigning the black and white vertices to represent points of order 5 
to show the fiber interchanging symmetry. A Belyi function for F11 is 
\begin{equation}
\int 
\frac{\sqrt{-2\sqrt5}\,(10+8\sqrt5)\,\big(x^4+(72\sqrt5-156)x^2+4\big)^4}
{25\left(x^6-22x^5+306x^4-840x^3-612x^2-88x-8+2\sqrt{5}xP 
\right)^3} \, dx,
\end{equation}
where $P=5x^4-68x^3+188x^2+136x+20$. The Galois orbits F11, G47, A19 with 
the same branching pattern are considered in \cite{Filimonenkov}, \cite[Example 5.7]{zvonkin10},
though the consequence of auto-duality for F11 is not noticed.

Examples of coverings with this dual interpretation of the moduli field 
are given 
in \cite{Pharamond}. One example of Pharamond is the branching pattern
$4+2+1=4+2+1=4+2+1$ with two Galois orbits.
One moduli field  is $\QQ(\sqrt{-1-2\sqrt2})$, though rational functions
can be expressed over $\QQ(\sqrt2)$ if the fiber location is not fixed. 
A function for the other orbit can be similarly written over $\QQ(\sqrt{-6})$, 
while the moduli field is of degree 12, 
obtained by adjoining the roots of the polynomial $z^3-z^2+(3+\sqrt{-6}) z-3$.

\appendix

\section{Appendix: Sorting criteria}
\label{sc:criteria}

In \S \ref{sec:ajnumbers}, the minus-4-hyperbolic Belyi functions
were grouped into 10 classes A--J.
We order the Belyi functions inside those classes by the following 
criteria:
\begin{itemize} \itemseparate
\item[\refpart{a}] the first criterium is the $j$-invariant;
\item[\refpart{b}] the second criterium is the branching
fractions\footnote{The first two criteria establish that our list is basically sorted
by Heun equations. To identify the Heun equations, invariants describing accessory 
parameters should be added \cite[\S D]{HeunForm}.};
\item[\refpart{c}] the last criterium is the degree of the covering. 
\end{itemize}
The sort of $j$-invariants lexicographically adheres to the following criteria:
\begin{itemize} \itemseparate
\item[\refpart{a1}] the $j$-field;
\item[\refpart{a2}] the $t$-field;
\item[\refpart{a3}] the leading coefficient of the minimal polynomial in $\ZZ[x]$ 
for the $j$-invariant. 
\end{itemize}
The order of $j$-fields and $t$-fields is settled by the following criteria:
\begin{itemize} \itemseparate
\item[\refpart{f1}] the field degree;
\item[\refpart{f2}] if the field is a quadratic extension of $\QQ$ then:
\begin{itemize} \itemseparate
\item[\refpart{f1a}] real quadratic fields have precedence over $\QQ(\sqrt{a})$ with $a<0$;
\item[\refpart{f1b}] the fields  $\QQ(\sqrt{a})$ with the same sign of $a$ are ordered 
by the increasing order of $|a|$.
\end{itemize}
\item[\refpart{f3}] if the field is of higher degree, 
then the criterium is the field discriminant.
\end{itemize}
The integers in \refpart{a3} and \refpart{f3} are ordered as follows:
\begin{itemize} \itemseparate
\item[\refpart{i1}] the product of the primes dividing the integer;
\item[\refpart{i2}] the absolute value.
\end{itemize}
The numbers in \refpart{i1}, \refpart{i2}, \refpart{f1b} and \refpart{c} are ordered
in the increasing order.
The tuples of branching fractions are ordered as follows
\begin{itemize} \itemseparate
\item[\refpart{b1}] in each tuple, the four branching fractions are ordered
in the increasing order of their denominators, then secondarily the numerators.
\item[\refpart{b2}] the tuples are compared lexicographically, from their first elements,
and the elements are matched first by their denominators then numerators.
\end{itemize}
These criteria break all ties in our list of Belyi functions. Due to \refpart{i1},
the fields or $t$-values that ramify or degenerate modulo
the same set of primes are placed next to each other. 
The leading coefficient in \refpart{a3} gives information about the primes 
where the covering is ramified. In particular, for $j\in\QQ$ the leading coefficient 
is the denominator of $j$. 

\section{Appendix: The A-J tables}
\label{sec:ajtables}

The following pages display tables of Galois orbits of minus-4-hyperbolic Belyi functions,
grouped as specified in \S \ref{sec:ajnumbers} and ordered by the criteria in \S \ref{sc:criteria}.
All tables contain the following columns:
\begin{itemize} \itemseparate
\item Id: the label from A1 to J28;
\item Branching frac.: the branching fractions of a minus-4-hyperbolic function;
\item $d$: the degree of a Belyi function;
\item $[k\ell m]$: the values of $k,\ell,m$ written compactly. For $k=2,\ell=3,m\ge10$,
only the value of $m\in[10,14]$ is given.
\item Monodromy/comp.~or Mndr/cmp.: The monodromy group $G=\ldots$ is given for indecomposable coverings, and compositions are indicated otherwise. The composition notation is explained 
in \S \ref{sec:compose}.
\end{itemize}
Other occasional columns:
\begin{itemize} \itemseparate
\item $j$-invariant: given if it is in $\QQ\setminus\{1728\}$, in a factorized form;
\item $d_j$: the degree of the $j$-field (in tables I, J);
\item disc $\QQ(j)$, disc $\QQ(t)$: the field discriminants. 
If the extension $\QQ(t)\supset \QQ(j)$ is of degree 6, the degree of the the $t$-field
is indicated in the disc $\QQ(t)$ column in a small underlined font.
\item $\sqrt{\;}\,$: indicates the quadratic extension of either the $t$-field (in Tables \mbox{C, D})
or of the $j$-field (in Tables F, G);
\item $m$-$\sqrt{\;}\,$: the quadratic extension for the moduli field (only in table A);
\item $r$-$\sqrt{\;}\,$: the quadratic extension for the $r$-field (only in table A).
\end{itemize}
The tables are supplemented by pictures of respective minus-4-hyperbolic dessins d'enfant.
Some thinly drawn\footnote{Our policy 
of drawing dessins is the following. White vertices of order 2 
are not shown, but the edges going through them are drawn thick. 
Other white vertices are shown,  but the incident edges are drawn thin. 
A black vertex of degree $\ge 2$ is not drawn
(as it is a clear branching point), unless it is incident to a thin edge. 
The dessins were drawn from the combinatorial representations $(g_0,g_1,g_\infty)$ first by hand,
then by using a developed script language that was translated to {\sf LaTeX} using {\sf Maple}.}
dessins represent also the composition $4\varphi(1-\varphi)$ giving a clean dessin.
The composition label is then marked \mbox{by $^\bullet$.}
The J-pictures marked by the symbol \# represent 
4 dessins\footnote{Apart from the \#-labeling and Figure \ref{fg:deg60s},
all other pictures represent either one dessin (if there is a reflection symmetry)
or two dessins related by a complex comnjugation (otherwise). In the cases like B13, F12,
a reflection symmetry should be imagined on the Riemann sphere, along a ``circular" equator.}
each, obtainable by reflecting  (with respect to a horizontal axis) their left and right parts 
independently. The dessins of B12, C6, C30, D45, F1, F4, F6, F7, F11, H1, H10--H14, H46 
are displayed in Figure \ref{fg:deg60s}, \ref{fg:mdessins}, \ref{fg:vdessins} earlier.
%

\begin{table}
\begin{minipage}{336pt}
 

\end{table}

\input{Jdessins.tex}


%

\section{Appendix: Composite Belyi functions}
\label{sec:compose}


Composition of a Belyi function $\varphi(x)$ into smaller degree rational functions
can be decided from the function field lattice between $\CC(x)$ and $\CC(\varphi)$,
as described in \cite[\S ~1.7.2]{LandoZvonkin}. 
The subfield lattices are listed in our online table \cite[\sf Decomposition\_or\_GaloisGroup]{HeunURL}.

On the other hand, composite minus-4-hyperbolic Belyi functions induce 
composite hypergeometric-to-Heun transformations. 
Thereby special cases of the parametric transformations P1--P61 of \cite[\S 2.2]{HeunForm} 
and the Heun-to-Heun transformations $2_H$, $4_H$ of \cite[\S 4.3]{HeunForm}
often occur as composition parts. 
The quadratic transformation $2_H$ acts on the exponent differences
as $\heunde{1/2,1/2,\alpha,\beta} \leftarrow \heunde{\alpha,\alpha,\beta,\beta}$
and changes the $j$-invariant to a 2-isogenous $j$-invariant. 
The transformation $4_H=2_H\circ2_H$ transforms 
$\heunde{1/2,1/2,1/2,\alpha} \leftarrow \heunde{\alpha,\alpha,\alpha,\alpha}$
and does not change the $j$-invariant.  
The composite transformations could be figured out by a careful consideration of possible compositions of hypergeometric-to-hypergeometric, indecomposable hypergeometric-to-Heun 
(parametric or some newly implied), and Heun-to-Heun transformations. 
That would constitute yet another check\footnote{For example,
any transformation to Heun's equation with 2 (or 3) exponent differences equal to $1/2$
can be composed with $2_H$ (or $4_H$, respectively). 
Further, any Belyi function of the $[k\ell m]$-type $[344]$ or $[266]$ gives rise to a type-$[246]$ composition (with the degree doubled), while all $[334]$, $[248]$-type functions 
give type-$[238]$ compositions, with the degree 2 or 3 times larger. In the same way, 
the $[335]$, $[255]$ Belyi functions give type $[2\,3\,10]$, $[245]$ (respectively) 
compositions. Quadratic transformation P1 of \cite{HeunForm} 
can be composed to C1 and all compositions in Table A of \S \ref{sec:ajtables}, 
as its $j$-invariant 1728 is 2-isogenous to itself and the $j$-value of C1. }
of our list of Belyi functions.
%
The most complicated decomposition lattice is for A18:
\begin{equation} \label{pc:a18comp}
\begin{picture}(380,100)(-6,0)
\put(15,45){$[4444]$}   
  \put(43,54){\line(5,6){28}} \put(44,48){\line(1,0){23}} \put(43,40){\line(4,-5){24}} 
\put(75,90){$[488]^{\times}$:P1} \put(120,92){\line(1,0){24}} 
\put(72,45){P12:$[2244]$}  \put(121,48){\line(1,0){24}}
  \put(120,54){\line(3,4){24}}  \put(120,40){\line(4,-5){24}}  
\put(71,0){C1:$[2244]^{\times}$}  \put(119,2){\line(1,0){22}} 
\put(148,90){$[288]$:P2}   \put(187,88){\line(6,-5){36}} 
\put(148,45){$[444]$:P9}   \put(186,54){\line(4,3){32}}  \put(187,48){\line(1,0){33}}
\put(146,0){P4:$[2224]$}   \put(189,8){\line(1,1){33}} 
\put(222,80){$[248]^{\times}$:A16}
\put(225,45){$[248]$:P10} 
\put(223,10){$[334]$:A17}  \put(267,19){\line(4,3){28}}  
\put(297,45){$[238]$}  
\thicklines \put(269,48){\line(1,0){24}}
\put(186,41){\line(3,-2){33}}   \put(272,78){\line(6,-5){24}} 
\end{picture}
\end{equation}
In the square brackets, we see the $[k\ell m]$ triples of intermediate hypergeometric equations,
or similar indication of intermediate Heun equations. 
The transformation from $[238]$ to the Heun equations 
is indicated before their square brackets.
Similarly, the $[k\ell m]$ triples are followed by the indication of
transformations from them to the final $[4444]$. The $\times$-power indicates two copies of
that intermediate function field.
The diagram includes P10 and P12, the most complicated
parametric compositions \cite[\S C]{HeunForm}. 
The components $[238]\!-\![248]$ and $[334]\!-\![444]$ are cubic transformations, 
while the other lines represent quadratic ones (possibly $2_H$).

In Tables of \S \ref{sec:ajtables}, we indicate the components either by an A-J label 
from our list (if applicable), or by the degree otherwise. In the latter case, we give intermediate
hypergeometric equations in the $[k\ell m]$ notation. Intermediate Heun equations are clear,
hence no extras to $2_H$. Deeper branching is indicated by $\{\}$. 
The A-J, P labels inside them either mean a transformation from a starting $[k\ell m]$
to an intermediate Heun equation (after $4_H$) or to the target Heun equation (otherwise).
The $\times\times$-power indicates three copies of an intermediate function field.
A label inside nested $\{\}$ refers to a composition string avoiding the merging point of the outer $\{\}$.
These hints should be enough to recover the composition lattices. 

\section{Appendix: Coxeter decompositions}
\label{sec:coxeter}

If a minus-4-hyperbolic Belyi function in a canonical form (of Definition \ref{df:jtfield})
is defined over $\RR$, 
the Schwarz maps\footnote{We already considered Schwarz maps in the paragraph 
after Remark \ref{rm:noexist}. If a hypergeometric equation has real local exponent differences
$\alpha,\beta,\gamma$ in the interval $[0,1]$, the image of the upper half plane $\subset\CC$
under its Schwarz map is a curvilinear triangle with the angles $\pi\alpha,\pi\beta,\pi\gamma$. 
A nice illustration can be found in \cite[pg. 38]{beukers}.
Analytic continuation of Schwarz maps follows the Schwarz reflection principle.
Hodgkinson \cite{hodgkins1} first observed that pull-back transformations 
of hypergeometric equations induce tessellations of Schwarz triangles 
into smaller congruent Schwarz triangles.
Similarly, if a Heun equation has real local exponent differences
$\alpha,\beta,\gamma,\delta$ in the interval $[0,1]$, 
the image of its Schwarz map is a curvilinear quadrangle
with the angles $\pi\alpha,\pi\beta,\pi\gamma,\pi\delta$. }
of the related hypergeometric and Heun equations fit together nicely.
Particularly, the quadrangle of Heun's equation is then tessellated into
congruent (in the hyperbolic metric) triangles of the hypergeometric equation. 
The degree formula in Lemma \ref{th:klm}\refpart{ii} can be interpreted as
the area ratio between the hyperbolic quadrangle and the triangles, if we multiply both the
numerator and the denominator by $\pi$. 
Subdivisions of hyperbolic quadrangles (or triangles) into congruent hyperbolic triangles
are called {\em Coxeter decompositions} in \cite{Felixon}. The list of Coxeter decompositions
can be compared with our list of Belyi maps with the $r$-field $\subset\RR$, 
providing a mutual check of completeness.  

The Belyi functions of Tables D, E, G  (of Appendix \ref{sec:ajtables}) 
give no Coxeter decompositions, as their $r$-fields certainly have no real embeddings. 
The obstructed Belyi functions of \S \ref{sec:conic} give no Coxeter decompositions either
(except F7 of \S \ref{sec:f7}).  Here is the count of Coxeter decompositions
induced by our Belyi functions: 
\begin{itemize} \itemseparate
\item Table A gives 10 Coxeter decompositions. The last column shows that 
the other 14 Belyi functions have imaginary quadratic $r$-fields.
\item Tables B, C give $23+34$ decompositions. 
The cases\footnote{Details of the $r$-extensions can be found in 
\cite[\sf j\_t\_and\_r\_Field\_MinPoly]{HeunURL}. The list of cases with additional extensions
for the $r$-field correlates well with the list of  Belyi coverings with interesting monodromy groups 
(such as PSL in tables of \S \ref{sec:ajtables}) and the list of multiple Galois orbits with the same branching pattern (as one can inspect empty entries in the first columns in tables of \S \ref{sec:ajnumbers}).}
with an imaginary quadratic extension $\QQ(t)\supset \QQ(j)$ are
B2, B6, B9, B10, B12, B18, B19, B21, B22, B27, B28 and
C2, C3, C6, C11, C22, C24, C30, C31.
\item Each entry of the F-table with discrim $\QQ(t)<0$ gives one 
Coxeter decomposition; 10 in total.
\item The entries F3, F23 with $\QQ(t)=\QQ(j)$ give pairs of Coxeter decompositions.
F20 gives another pair with the $t$-field $\QQ(\sqrt7,\sqrt3)$,
but F25 gives none with the $t$-field $\QQ(\sqrt{4\sqrt{22}-22})$.
\item Each entry of the H-table with discrim $\QQ(j)<0$ and either $\QQ(t)=\QQ(j)$ 
or discrim $\QQ(t)>0$ gives a Coxeter decomposition; 11+11 in total.
\item Similarly, the odd degree I, J-orbits with $\QQ(t)=\QQ(j)$ or discrim $\QQ(t)>0$
give single Coxeter decompositions; $6+3$ among I22--I33 and $2+3$ in the J-table.
\item 
 I10, I15, J11 have pairs of real dessins and $\QQ(t)=\QQ(j)$.
They give pairs of Coxeter decompositions.
\end{itemize}
In total, we have 125 
decompositions, just as listed in 
\cite[Figures 10 (5)--(11), 12, 13, 15--18]{Felixon}.
There is a caveat, however. The decompositions 24 and 36 in \cite[Figure 18]{Felixon} coincide,
while one triangulated quadrangle with the angles $\pi/3,2\pi/3,\pi/7,3\pi/7$ is missing.
We identify the repeated decomposition as C4, and the missing one as I26. 
All Coxeter decompositions from our Belyi 
functions can be discerned in Figure \ref{fg:coxeter}. The similar pictures for Coxeter decompositions
from parametric hypergeometric-to-Heun transformations are given in \cite[Figure 2]{HeunClass}.

\begin{vchfigure}
\begin{picture}(460,390)(30,219)
\put(0,190){\includegraphics[height=250pt]{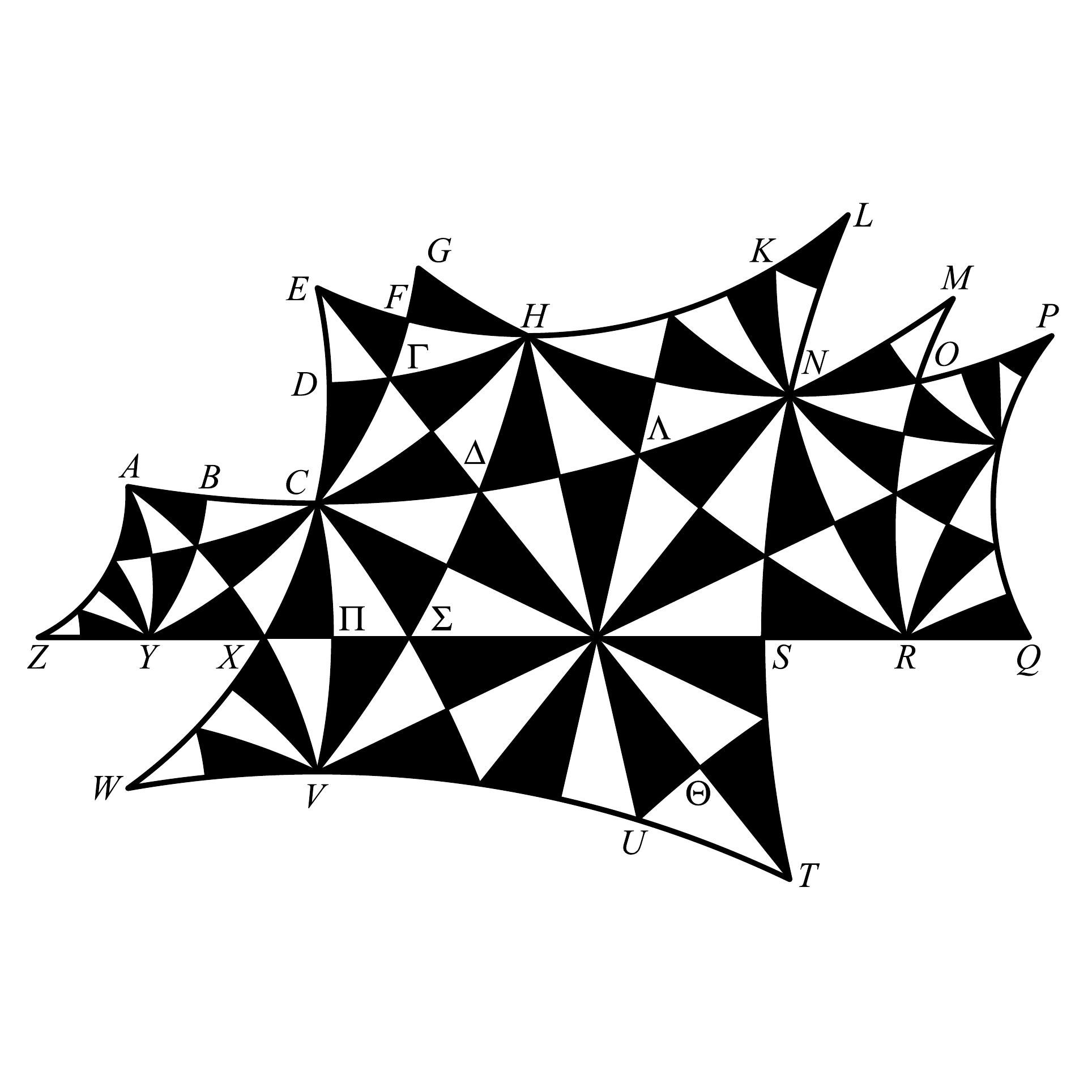}}
\put(285,357){\includegraphics[height=162pt]{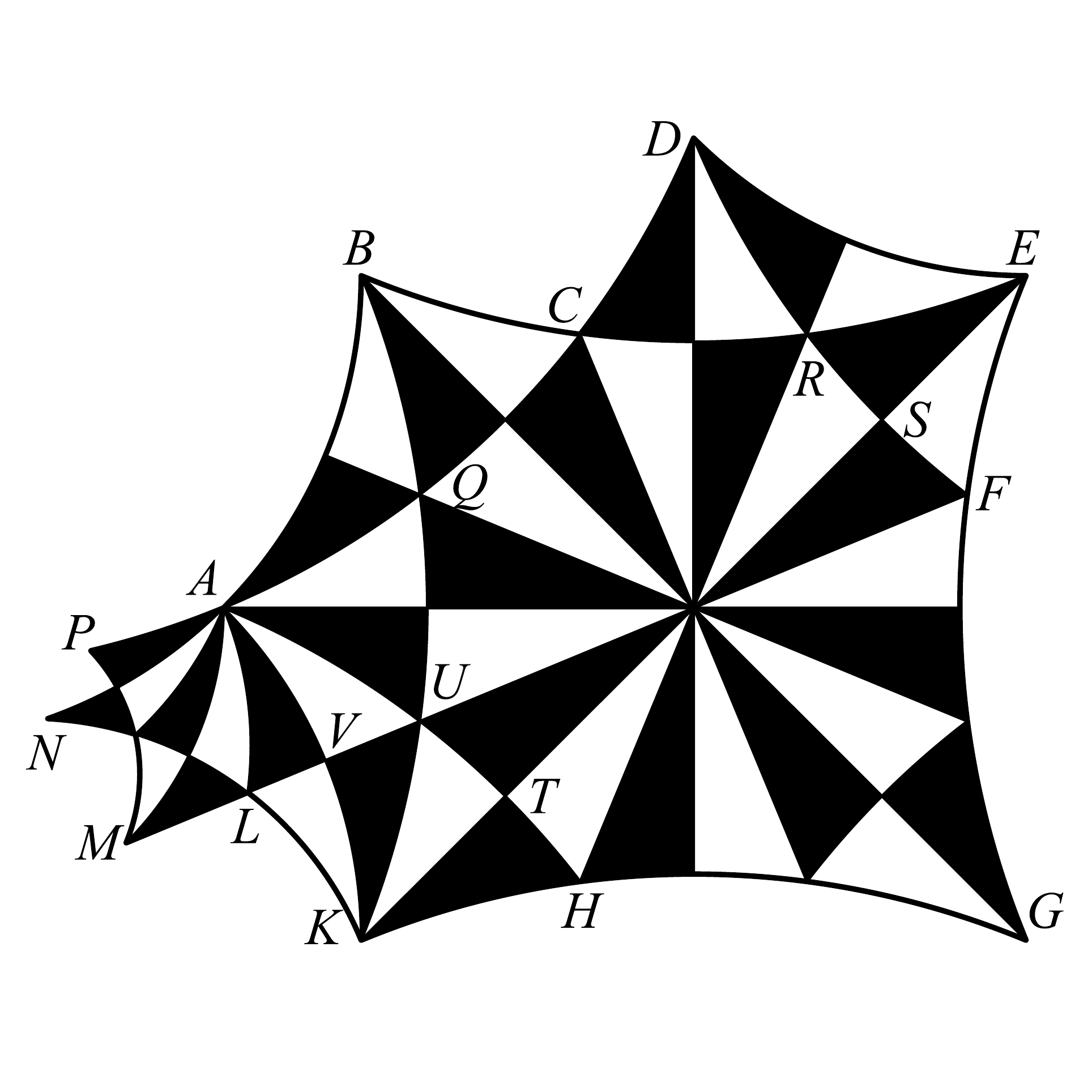}}
\put(131,395){\includegraphics[height=154pt]{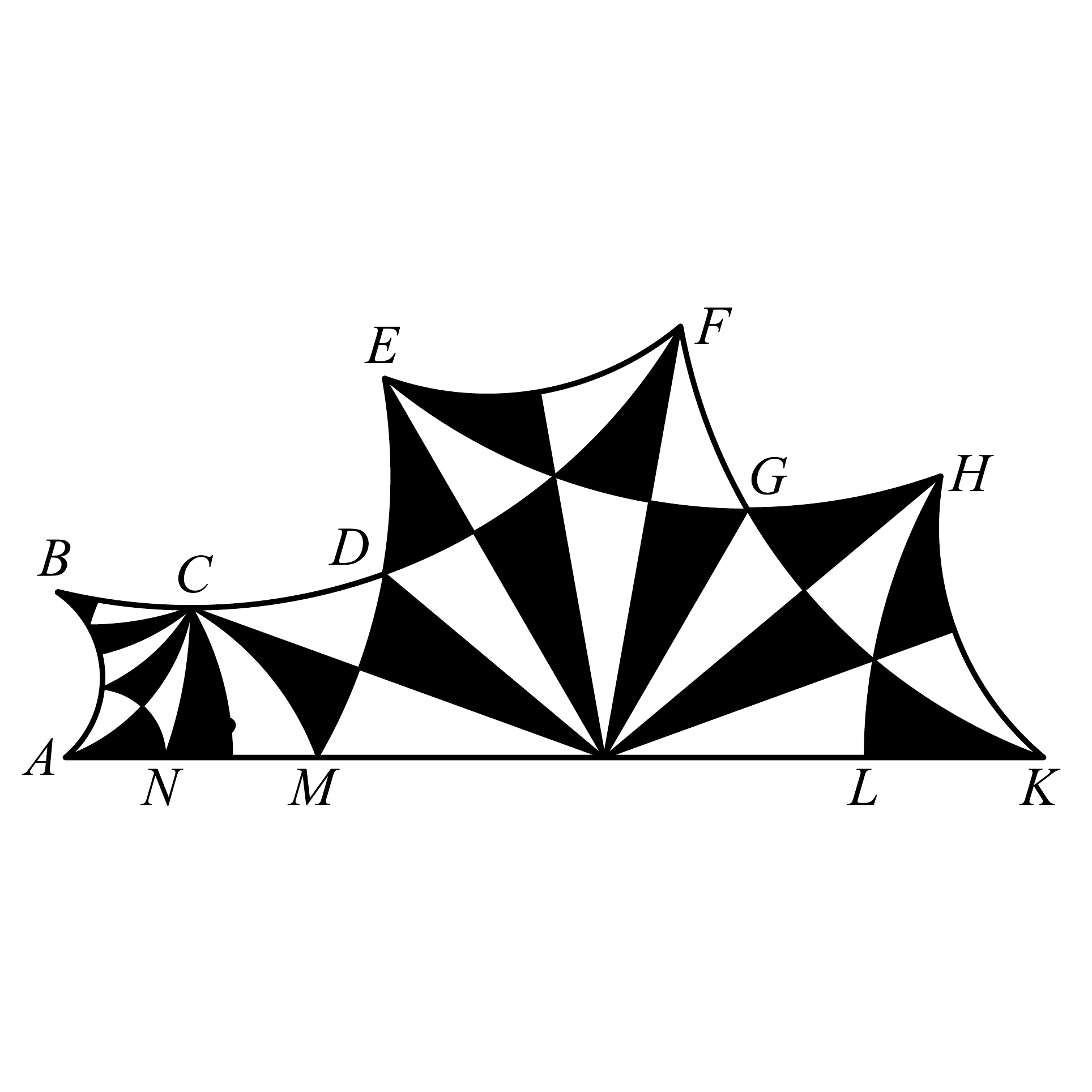}}
\put(0,510){\includegraphics[height=112pt]{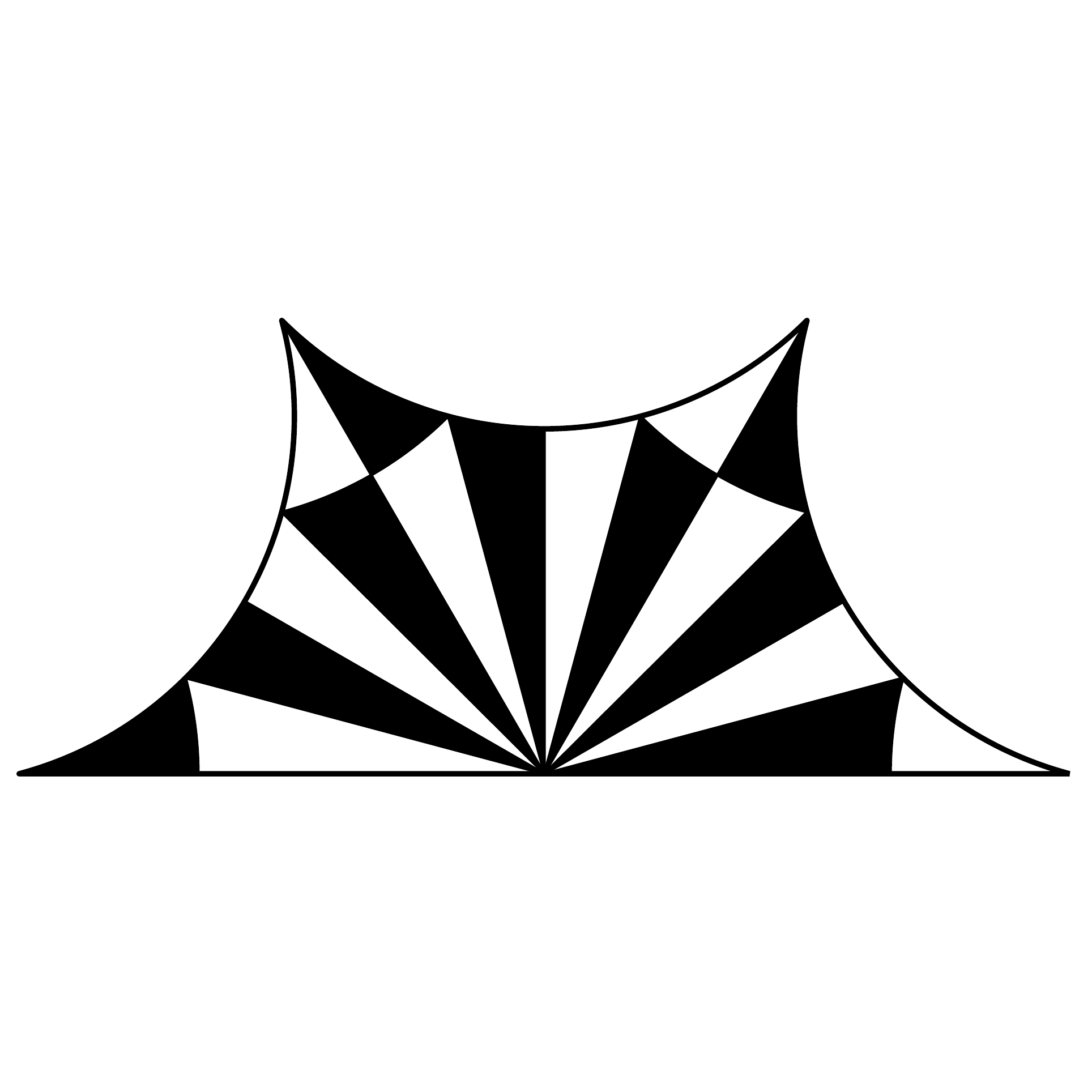}}
\put(137,519){\includegraphics[height=100pt]{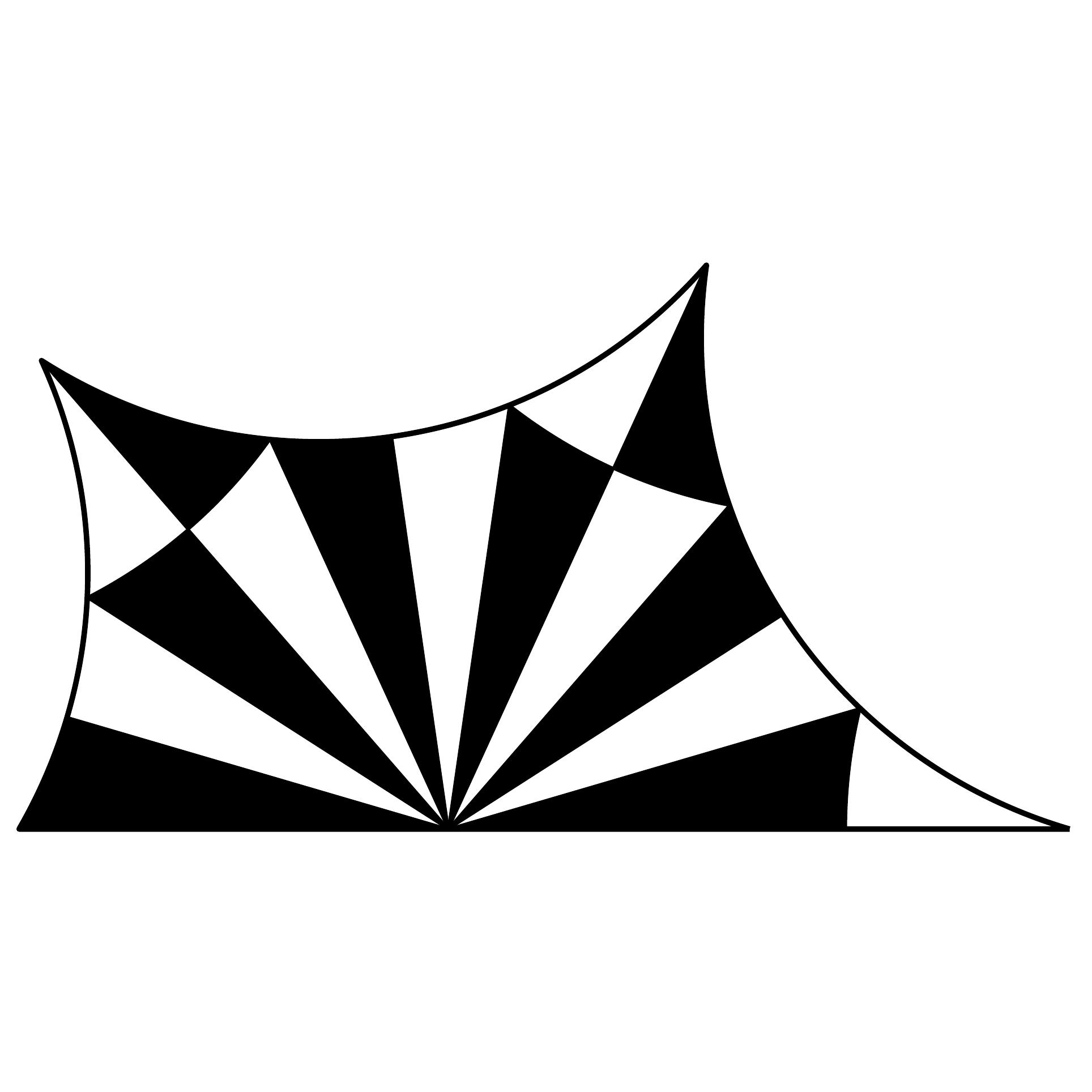}}
\put(261,497){\includegraphics[height=106pt]{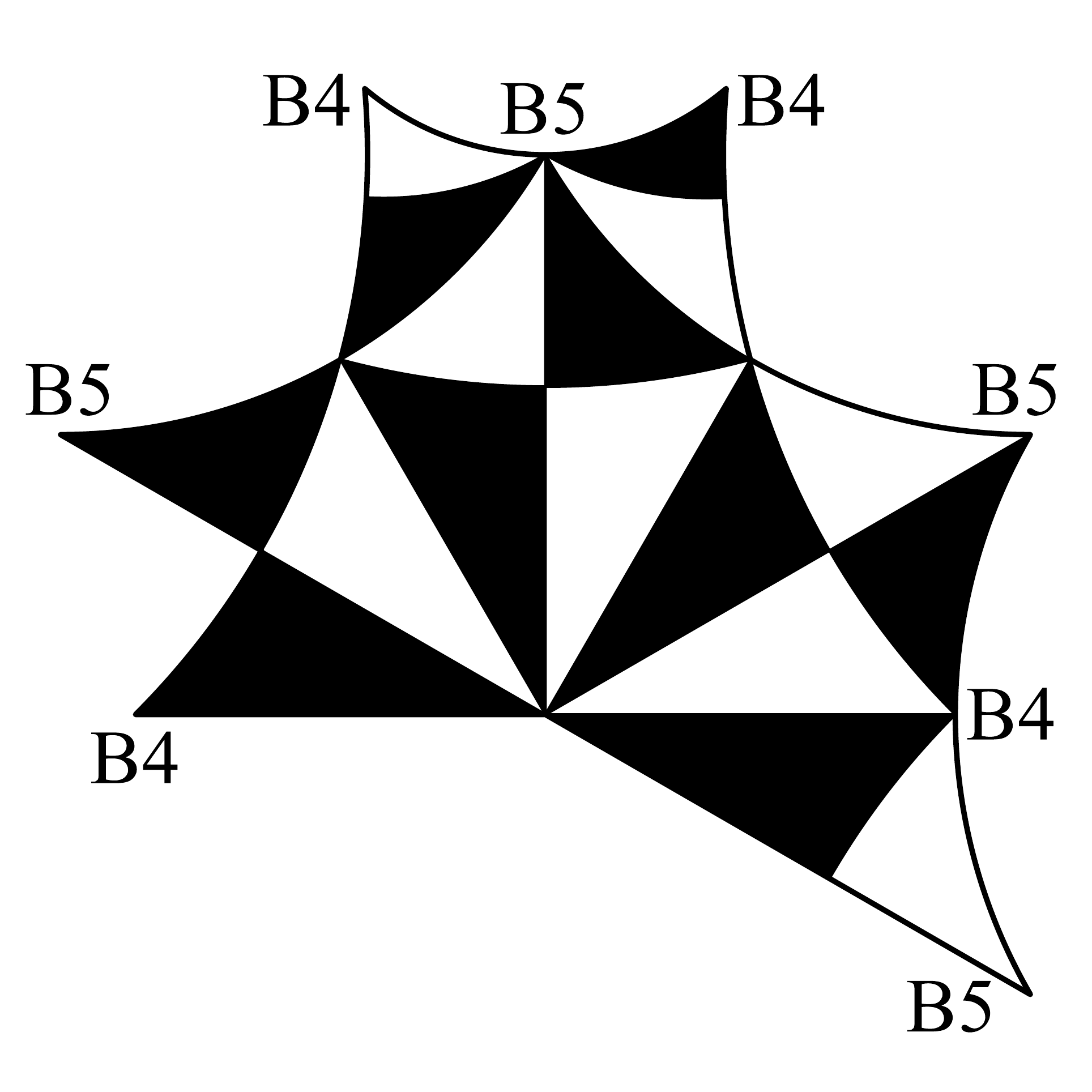}}
\put(-2,401){\includegraphics[height=120pt]{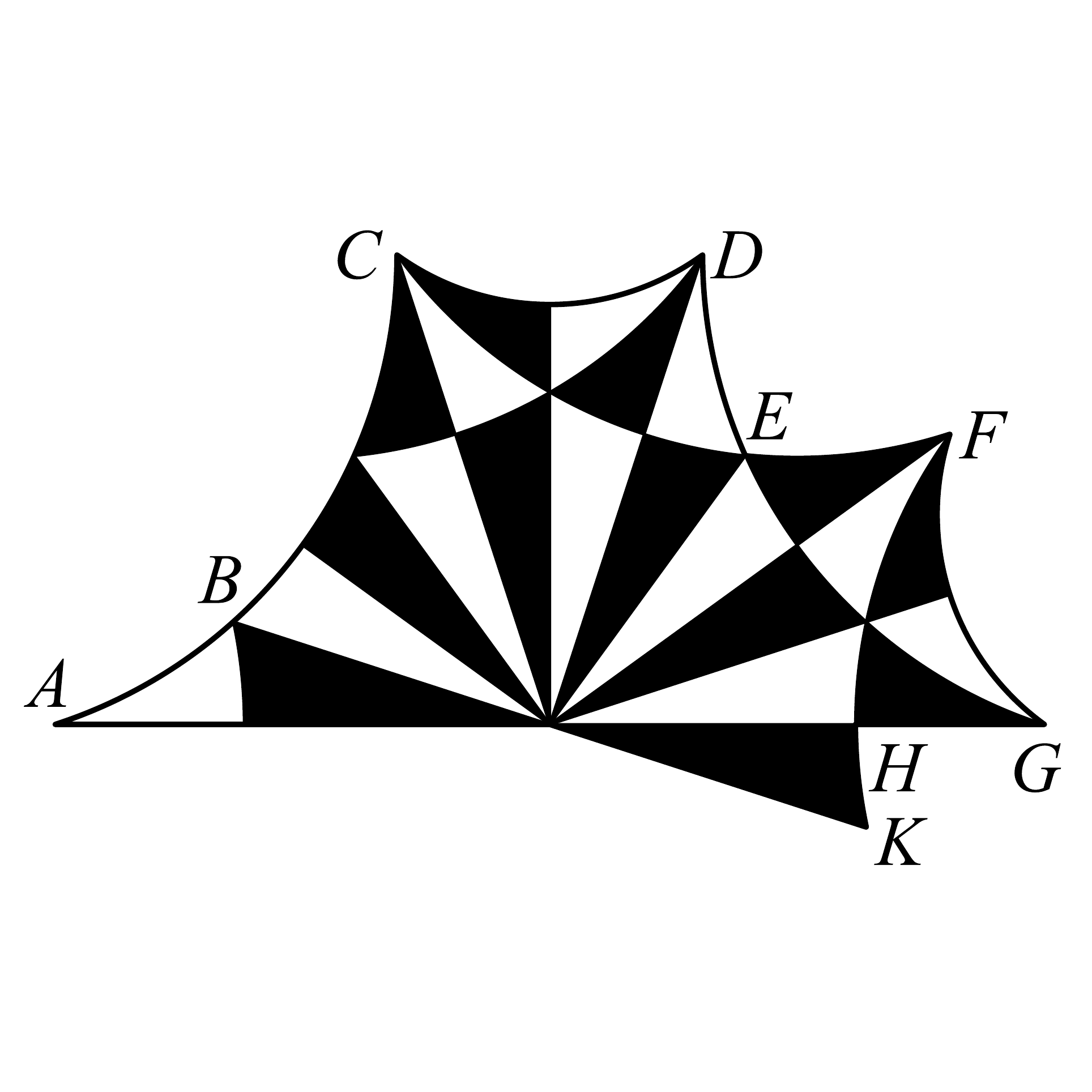}}
\put(275,211){\includegraphics[height=165pt]{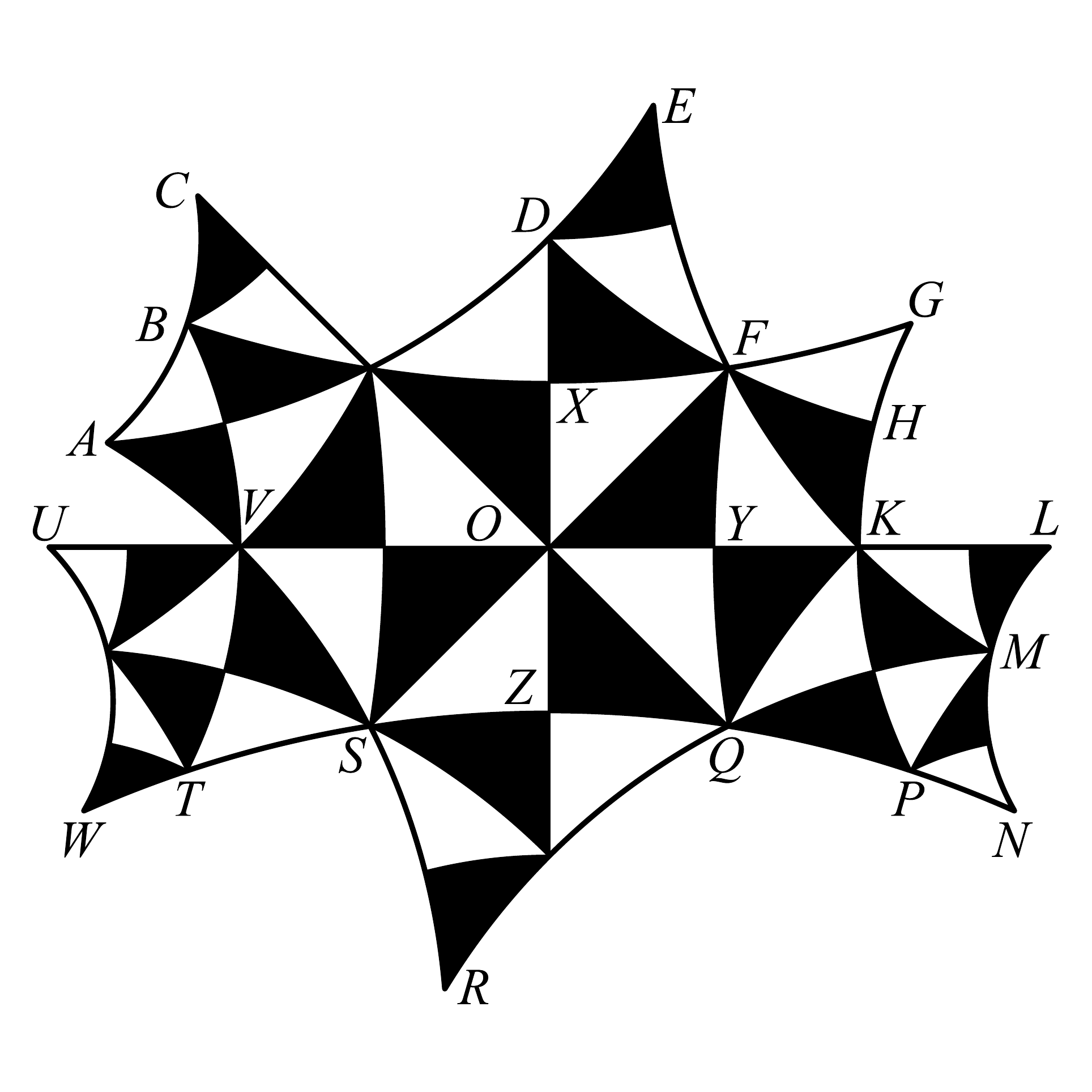}}
\put(2,590){\refpart{a}} \put(123,590){\refpart{b}} \put(252,590){\refpart{c}}
\put(2,495){\refpart{d}} \put(139,495){\refpart{e}} 
\put(301,485){\refpart{f}} \put(10,375){\refpart{g}}  \put(273,351){\refpart{h}} 
\end{picture}
\caption{The Coxeter decompositions of Felikson}
\label{fg:coxeter}
\end{vchfigure}

Belyi functions (with the $r$-field in $\RR$) and Coxeter decompositions
are identified\footnote{Dessins d'enfant and Coxeter decompositions are different geometric representations 
of a Belyi covering. The difference is twofold: the decompositions represent only a half of the Riemann sphere, and their vertices are the points not just above $z\in\{0,1\}$ but above $z=\infty$ as well.
To get a corresponding (real) dessin, two parallel copies of a Coxeter decomposition have 
to be glued along the edges to a topological sphere, and the vertices above $z=\infty$ 
with the incident edges, triangles have to be removed. }
by multiplying the branching fractions by $\pi$ and looking for quadrangles in \cite{Felixon}
with the same angles.
Pictures \refpart{a}, \refpart{b} in Figure \ref{fg:coxeter} 
show  the Coxeter decompositions 7, 6 
in \cite[Figure 15]{Felixon}. 
They represent the Belyi functions B11 and C13, respectively.
Picture \refpart{c} contains two hyperbolic quadrangles 
subdivided into twelve $(\pi/2,\pi/4,\pi/6)$-triangles.  
They represent the Belyi functions B4 and B5, and coincide with the
triangulations 4, 3 in \cite[Figure 12]{Felixon}, respectively.
Picture \refpart{d} contains the first five triangulations in \cite[Figure 15]{Felixon},
into $(\pi/2,\pi/3,\pi/10)$-triangles.
Here are the labels of Belyi maps and the quadrangles, 
in the same sequence as in \cite{Felixon}:\\[2pt]
\begin{tabular}{r@{: }lr@{: }lr@{: }lr@{: }lr@{: }l}
\swq{BCFK}{C38}, & \swq{ACEG}{F14}, & \swq{ACFH}{C15}, 
 & \swq{ACDG}{C25}, & \swq{ACFG}{B24}.
\end{tabular} \\[2pt]
Picture \refpart{e} contains the 10 $(\pi/2,\pi/3,\pi/9)$-triangulations in
\cite[Figure 16]{Felixon}:\\[2pt] 
\begin{tabular}{r@{: }lr@{: }lr@{: }lr@{: }lr@{: }lr@{: }l}
\swq{DFKM}{H2}, & \swq{EGKM}{B7}, & \swq{EHLM}{C41}, 
 & \swq{EHKM}{C7}, & \swq{CFKM}{C9}, \\ 
\swq{CFKN}{H4}, & \swq{CFKP}{H6}, & \swq{EFKM}{A6},  & \swq{ACFK}{C23}, & \swq{ABFK}{H8}.
\end{tabular} \\[2pt]
There is initial ambiguity for assigning B7 and H2 because of the same branching fractions.
But B7 is a composition $3\,[339]\circ4$ as shown in the B-table,
and its Coxeter decomposition splits\footnote{Coxeter decompositions
do not always split according to (all) compositions of their Belyi functions, because smaller degree components do not necessarily have Coxeter decompositions. 
For example, consider $\AJ{A18}=\AJ{A16}\circ3$, 
$\AJ{A19}=2_H\circ\AJ{A1}$, $\AJ{B4}=2_H\circ\AJ{D25}$, $\AJ{B14}=2_H\circ\AJ{D9}$, 
$\AJ{J19}=2_H\circ\AJ{H45}$, $\AJ{J26}=2_H\circ\AJ{J25}$, etc.}
into 3 triangles with the angles $\pi/3,\pi/3,\pi/9$ (each formed by 4 smaller triangles). 
Picture \refpart{f} contains the 19 $(\pi/2,\pi/3,\pi/8)$-triangulations in \cite[Figure 17]{Felixon}:\\[2pt] 
\begin{tabular}{r@{: }lr@{: }lr@{: }lr@{: }lr@{: }lr@{: }lr@{: }l}
\swq{ABEK}{B1}, & \swq{BEGK}{A18}, & \swq{BEKN}{B14}, 
 & \swq{BGKN}{A5}, & \swq{ADST}{C1}, \\
\swq{FGKS}{I10}, & \swq{ACET}{I10}, & \swq{CEKQ}{C32}, 
& \swq{DFUQ}{F19}, & \swq{ADFU}{C21}, \\
\swq{ADFV}{H29}, & \swq{ABRH}{C28},  & \swq{ACEK}{B29}, 
& \swq{ADSK}{B15}, & \swq{ADFL}{F8}, \\
\swq{ADFM}{B30}, & \swq{ABET}{C8}, & \swq{PDFM}{H22}, 
& \swq{ADEK}{C5}.
\end{tabular} \\[2pt]
Picture \refpart{g} contains the 58 $(\pi/2,\pi/3,\pi/7)$-triangulations in
\cite[Figure 18]{Felixon}. Here is the respective sequence of Belyi maps and the quadrangles,
with the repeated decomposition 36 replaced by the one for I26: \\[2pt]
\begin{tabular}{r@{: }lr@{: }lr@{: }lr@{: }lr@{: }lr@{: }lr@{: }l}
\swq{CHR$\Sigma$}{C34}, & \swq{HMR$\Sigma$}{F3}, & \swq{MTV$\Delta$}{A15},
& \swq{HKR$\Sigma$}{A11}, & \swq{HOR$\Sigma$}{H33},  & \swq{A$\Delta\!$TV}{A14}, \\
\swq{ELTW}{A21}, & \swq{R$\Sigma\Delta\Lambda\,$}{C29}, & \swq{MS$\Sigma\Delta$}{I15}, 
& \swq{CMS$\,\Sigma$}{C42}, & \swq{CMS$\,\Pi$}{F22}, & \swq{UV$\Delta\Theta$}{I15}, \\
\swq{CHR$\,\Pi$}{B33},  & \swq{C$\Lambda$R$\Sigma$}{C40}, & \swq{C$\Lambda$R$\,\Pi$}{H53},
& \swq{MR$\Sigma\Delta$}{B20}, & \swq{CHRX}{B17}, & \swq{C$\Lambda$RX\,}{B25}, \\
\swq{CMSX}{H50}, & \swq{HMS$\Sigma$}{C33}, & \swq{CMR$\Sigma$}{C26}, 
& \swq{CMR$\Pi$}{C35}, & \swq{C$\Lambda$RY}{C27},  & \swq{CHRY}{C4}, \\
\swq{A$\Lambda$RX}{H48}, & \swq{CMRX}{F3}, & \swq{CMSY}{H40}, 
& \swq{AMSX}{I33},  & \swq{BMSY}{H37}, & \swq{HRY$\,\Gamma$}{B32}, \\ 
\swq{CMRY}{B31}, & \swq{EHRX}{B34}, & \swq{AMRX}{F23}, 
& \swq{BMRY}{C39}, & \swq{AMRY}{F20}, & \swq{NR$\Sigma\Delta$}{I26}, \\ 
\swq{CNR$\Sigma$}{F23}, & \swq{CNR$\Pi$}{C36}, & \swq{CNRX}{H52}, 
& \swq{CNRY}{F24}, & \swq{CMTV}{F20}, & \swq{CMTW}{B13}, \\
\swq{EKRX}{I23}, & \swq{DHRX}{J11},  & \swq{HLS$\,\Sigma$}{J11}, 
& \swq{EHRX}{H44}, & \swq{BNRY}{I25}, & \swq{FKRY}{J24}, \\
\swq{FLSY}{J19},  & \swq{GHRY}{H36}, & \swq{ANRX}{C12}, 
& \swq{GMSY}{J26}, & \swq{GMRY}{B16},  & \swq{ANRY}{C18}, \\  
\swq{E$\Theta\!$UW}{J17}, & \swq{GPQY}{J23}, & \swq{ANRZ}{I28}, & \swq{GORY}{F21}.
\end{tabular} \\[2pt]
The ambiguity between A14 and B32 (due to the same branching fractions)
is resolved by  the reflection symmetry of $\AJ{A14}=2\circ10$. 
Picture \refpart{h} contains the 20 $(\pi/2,\pi/4,\pi/5)$-triangulations in
\cite[Figure 13]{Felixon}\\[2pt] 
\begin{tabular}{r@{: }lr@{: }lr@{: }lr@{: }lr@{: }lr@{: }l}
\swq{FKPS}{B26}, & \swq{VYQT}{H39}, & \swq{VKQT}{C19}, 
& \swq{ACOS}{C16}, & \swq{ACQS}{B23}, \\
\swq{ACQR}{H16}, & \swq{FHPS}{I24}, & \swq{DHPZ}{H17},  
& \swq{VKPT}{F13},  & \swq{BXZT}{C14}, \\
\swq{BFST}{B8}, & \swq{BFQT}{H41}, & \swq{OLNZ}{I29}, 
& \swq{OLNS}{F15}, & \swq{VLNS}{C10}, \\
\swq{WUYQ}{I31}, & \swq{WUKP}{H18}, & \swq{BGPT}{C17}, 
& \swq{AEMR}{A20}, & \swq{WULN}{I30}.
\end{tabular} \\[2pt]
The is ambiguity between C14 and H17 is resolved by the composition $\AJ{C17}=2_H\circ\AJ{C14}$. 
The non-parametric decompositions (5)--(11) of \cite[Figure 10]{Felixon}
and the decompositions (1), (2) of \cite[Figure 12]{Felixon}
represent the Galois orbits  F18, B3, C20, C37, F7, H21, A17, H15, A19, respectively.
They can be obtained from our listed quadrangles of (respectively)
F19, B4, C21, C38, F8, H22, A18, H16, A20
by pairing their triangles to larger triangles with the requisite angles $(\pi/3,\pi/3,\pi/4)$, 
$(\pi/3,\pi/4,\pi/4)$, $(\pi/3,\pi/3,\pi/5)$ or $(\pi/2,\pi/5,\pi/5)$.

\section{Appendix: Arithmetic observations}
\label{sec:arithmetic}

As observed in \cite[\S 2.3]{HeunForm}, the $t$-parameters of Heun equations
reducible to hypergeometric equations by a pul-back transformation are arithmetically interesting.
The whole orbit (\ref{S3action}) of $t$-values can be encoded by an arithmetic identity 
$A+B=C$ with algebraic integers $A,B,C$ (as ``co-prime" as possible),
as the set $\{A/C,B/C,C/A,C/B,-A/B,-B/A\}$. Here are these identities for a few $t$-orbits in $\QQ$:
\begin{align*}
& \AJ{B25}: 1+2\!\cdot\!11^2 =3^5,
&& \AJ{B29}: 2^2+11^2=5^3,
&& \AJ{B30}: 1+3^35^2=2^213^2, \\
& \AJ{B31}: 1+2^5\,3\!\cdot\!5^2=7^4,
&& \AJ{B33}: 11^3+2^27^4=3^{7}\,5, 
&& \AJ{B34}: 7^4+3^35^3=2^{4}19^2.
\end{align*}
The terms in these identities involve only small primes, usually in some power. 
Correspondingly, the $t$-values factorize nicely in $\QQ$.
These identities are interesting in the context of the ABC conjecture \cite{Wikipedia}
and $S$-unit equations \cite{Wikipedia}. 
The ``factorization" pattern holds for the $t$-values in algebraic extensions of $\QQ$ as well, 
though arithmetic quality is then measured more technically \cite{nitaj} by the prime places 
and arithmetic height in $\PP^2(\overline{\QQ})$. The underlying reason is that the Belyi coverings
(of pull-back transformations) tend to degenerate only modulo a few small primes \cite{beckmann}.
Hence the $t$-orbit (\ref{S3action}) degenerates only modulo those bad primes.

Amidst the encountered examples, we find the following well-known identities $A+B=C$
in quadratic fields:
\begin{align*} \textstyle
\AJ{C18}:  \left(\frac{\sqrt5-1}2\right)^{12} \! +2^43^2\,\sqrt5=\left(\frac{\sqrt5+1}2\right)^{12} \! ,  \qquad
\AJ{D37/D39}:  \left(\frac{1+\sqrt{-7}}2\right)^{13} \! +\sqrt{-7}=\left(\frac{1-\sqrt{-7}}2\right)^{13} \! .
\end{align*}
They are among top 12 known examples of remarkable ABC identities \cite{nitaj}
in algebraic number fields. Their 
ABC-quality is $\approx1.697794$, 
$1.707222$, 
respectively, while Nitaj's table \cite{nitaj} includes examples with the quality $>1.5$.  
The Belyi function D42 gives a new example in $\QQ(\sqrt{-14})$ 
with the quality $\log(3^{13}5^3)/\log(56\cdot2\cdot7\cdot3^2\cdot5^2)\approx 1.581910$.
However, the class number of $\QQ(\sqrt{-14})$ is equal to 4,
hence an explicit arithmetic identity is less impressive, without 13th powers:
\begin{align}
\left(5-2\sqrt{-14}\right) \left(11+\sqrt{-14}\right)^3+\left(\sqrt{-14}\right)^3=
 \left(5+2\sqrt{-14}\right) \left(11-\sqrt{-14}\right)^3.
\end{align}
Less symmetric quadratic identities arrise from the F, G-cases with $\QQ(t)=\QQ(j)$.
For example, G30 gives
\begin{equation} \textstyle
\left(\frac{1+\sqrt{-7}}2\right)^{10}+\left(\frac{1-\sqrt{-7}}2\right)^{5}+\left(2+\sqrt{-7}\right)^3=0.
\end{equation}
The Belyi coverings E10/E11give the following $A+B=C$ example in a number field of degree 6.
Let $\zeta$ denote a root of $z^6+4z^4-3z^2+2$. Then
\begin{align*}
\zeta^{23}+
\left(\frac{\zeta+\zeta^2}2-\frac{5\zeta^3+\zeta^5}4\right)^{23}
\left(\frac{1-\zeta}2-\frac{3\zeta^2-3\zeta^3+\zeta^4-\zeta^5}4\right)^{-6} 
  \hspace{50pt} \nonumber\\
=\left(\frac{-\zeta+\zeta^2}2+\frac{5\zeta^3+\zeta^5}4\right)^{23}
\left(\frac{1+\zeta}2-\frac{3\zeta^2+3\zeta^3+\zeta^4+\zeta^5}4\right)^{-6}.
\end{align*}
The numbers under the 23rd power have the norm 2,
while the numbers in the (-6)th power are units.

\small

\bibliographystyle{plain}
\bibliography{AGSF}

\end{document}